\newcommand{\C}{\mathbb{C}}
\newcommand{\Cn}{\mathbb{C}^n}
\newcommand{\D}{\mathbb{D}}
\newcommand{\T}{\mathbb{T}}
\newcommand{\gn}{\mathbb{G}_n}
\newcommand{\Gn}{\widetilde{\mathbb{G}}_n}
\newcommand{\gamn}{\Gamma_n}
\newcommand{\Gamn}{\widetilde{\Gamma}_n}
\newcommand{\be}{\beta}
\newcommand{\q}{\quad}
\newcommand{\qq}{\qquad}
\newcommand{\n}{\lVert}
\newcommand{\imp}{\Rightarrow}
\newcommand{\Lra}{\Leftrightarrow}
\newcommand{\lf}{\left(}
\newcommand{\ls}{\left\{}
\newcommand{\lt}{\left[}
\newcommand{\rf}{\right)}
\newcommand{\rs}{\right\}}
\newcommand{\rt}{\right]}
\newcommand{\nj}{{n \choose j}}
        \def\textmatrix#1&#2\\#3&#4\\{\bigl({#1 \atop #3}\ {#2 \atop #4}\bigr)}
        \def\dispmatrix#1&#2\\#3&#4\\{\left({#1 \atop #3}\ {#2 \atop #4}\right)}
\newtheorem{thm}{Theorem}[section]
\newtheorem{lemma}[thm]{Lemma}
\numberwithin{equation}{section} \theoremstyle{definition}
\newtheorem{defn}[thm]{Definition}
\newtheorem{rem}[thm]{Remark}
\begin{document}

\title[Characterizations of the symmetrized polydisc]
{Characterizations of the symmetrized polydisc via another family of domains}

\author[Sourav Pal]{Sourav Pal}
\address[Sourav Pal]{Mathematics Department, Indian Institute of Technology Bombay, Powai, Mumbai - 400076, India.} 
\email{souravmaths@gmail.com , sourav@math.iitb.ac.in}

\author[Samriddho Roy]{Samriddho Roy}
\address[Samriddho Roy]{Mathematics Department, Indian Institute of Technology Bombay, Powai, Mumbai - 400076, India.} \email{samriddhoroy@gmail.com}

\keywords{Symmetrized polydisc, extended symmetrized polydisc}

\subjclass[2010]{32A07, 32E20, 32Q02}

\thanks{The named author was supported by the Seed Grant of IIT Bombay, the CPDA of Govt. of India, the INSPIRE Faculty Award (Award No. DST/INSPIRE/04/2014/001462) of DST, India and the MATRICS Award of SERB, (Award No. MTR/2019/001010) of DST, India. The second named author was supported by a Ph.D fellowship from the University Grand Commission of India.}

\begin{abstract}

We find new characterizations for the points in the \textit{symmetrized polydisc} $\mathbb G_n$, a family of domains associated with the spectral interpolation, defined by
\[
\mathbb G_n :=\left\{ \left(\sum_{1\leq i\leq n} z_i,\sum_{1\leq
i<j\leq n}z_iz_j \dots, \prod_{i=1}^n z_i \right): \,|z_i|<1,
i=1,\dots,n \right \}.
\]
We introduce a new family of domains which we call \textit{the extended symmetrized polydisc} $\widetilde{\mathbb G}_n$, and define in the following way:
\begin{align*}
\widetilde{\mathbb G}_n := \Bigg\{ (y_1,\dots,y_{n-1}, q)\in
\mathbb C^n :\; q \in \mathbb D, \;  y_j = \beta_j + \bar{\beta}_{n-j} q, \; \beta_j \in \mathbb C &\text{ and }\\
|\beta_j|+ |\beta_{n-j}| < {n \choose j} &\text{ for } j=1,\dots, n-1 \Bigg\}.
\end{align*}
We show that $\mathbb G_n=\widetilde{\mathbb G}_n$ for $n=1,2$ and that ${\mathbb G}_n \subsetneq \widetilde{\mathbb G}_n$ for $n\geq 3$. We first obtain a variety of characterizations for the points in $\widetilde{\mathbb G}_n$ and we apply these necessary and sufficient conditions to produce an analogous set of characterizations for the points in ${\mathbb G}_n$. Also we obtain similar characterizations for the points in $\Gamma_n \setminus {\mathbb G}_n$, where $\Gamma_n =\overline{{\mathbb G}_n}$. A set of $n-1$ fractional linear transformations play central role in the entire program. We also show that for $n\geq 2$, $\widetilde{\mathbb G}_n$ is non-convex but polynomially convex and is starlike about the origin but not circled.

\end{abstract}

\maketitle


\section{Introduction}

\vspace{0.3cm}

\noindent The following notations and terminologies will be used throughout the paper. We denote by $\mathbb{R}$ and $\mathbb{C}$ the sets of real
numbers and complex numbers respectively. By $\mathbb{D}$ and
$\mathbb{T}$ we mean the open unit disc and
the unit circle respectively with centre at the origin of $\mathbb{C}$. We follow the standard convention to denote by $\mathcal M_{m\times n}(\mathbb C)$ (or $\C^{m \times n}$) the space
of all $m \times n$ complex matrices and $\mathcal M_n(\mathbb C)$
is used when $m=n$. For a matrix $A \in \mathcal M_n(\mathbb C)$,
$\n A \n$ is the operator norm of $A$. Also, the spectrum and the spectral radius of a bounded operator $T$ are
denoted by $\sigma(T)$ and $r(T)$ respectively.\\

In this article, we contribute to the understanding of the geometry of the symmetrized polydisc $\gn$ by producing a set of necessary and sufficient conditions each of which characterizes a point in $\gn$, where $\gn$ is the following family of domains:
\[
\mathbb G_n :=\left\{ \left(\sum_{1\leq i\leq n} z_i,\sum_{1\leq
i<j\leq n}z_iz_j \dots, \prod_{i=1}^n z_i \right): \,|z_i|<1,
i=1,\dots,n \right \}.
\]
This family of domains $\{ \gn \}$ is naturally associated with the spectral Nevanlinna-Pick interpolation in the following way: a matrix $T$ belongs to the spectral unit ball $\mathcal B_n^1 \subset \mathcal M_n(\C)$, i.e., $r(T)<1$  if and only if
$\pi_n(\xi_1,\dots, \xi_n) \in \mathbb G_n$ (see
\cite{costara1}). Here $\xi_1, \dots , \xi_n$ are the
eigenvalues of $T$ and $\pi_n$ is the symmetrization map on
$\mathbb C^n$ defined by
\[
\pi_n(z_1,\dots, z_n) = \left(\sum_{1\leq i\leq n} z_i,\sum_{1\leq
i<j\leq n}z_iz_j,\dots, \prod_{i=1}^n z_i \right).
\]
For given distinct points $\eta_1,\dots, \eta_k$ in $\D$ and matrices $T_1,\dots , T_k \in \mathcal B_n^1$, the spectral Nevanlinna-Pick interpolation seeks necessary and sufficient conditions under which there exists an analytic function $F:\D \rightarrow \mathcal M_n(\C)$ that interpolates the data, that is, $F(\eta_i)=T_i$ for $i=1,\dots ,k$. Apart from the derogatory matrices, the $n\times n$ spectral Nevanlinna-Pick problem is equivalent to a similar interpolation problem for the symmetrized polydisc $\gn$ (see \cite{ay-ieot}, Theorem 2.1). Note that a bounded domain like $\mathbb G_n$, which has complex-dimension $n$, is much easier to deal with than an unbounded $n^2$-dimensional object like $\mathcal B_n^1$. The symmetrized polydisc has attracted considerable attention in past two decades because of its rich function theory \cite{ALY12, Bh-Sau, zwonek1, pal-roy 2, PZ, zwonek3}, elegent complex geometry \cite{ALY14, AY04, bharali, costara, edi-zwo, NN, zwonek4, zwonek5} and associated operator theory \cite{ay-jfa, ay-jot, tirtha-sourav, tirtha-sourav1, Bisai-Pal1, sourav, sourav3, pal-shalit} (also see references there in). It is evident from the definition that $\mathbb G_1=\D$ which is convex but for $n\geq 2$, $\gn$ is non-convex but polynomially convex (see \cite{ay-jfa, edi-zwo}).\\

The symmetrized polydisc $\gn$ and its closure $\gamn$ are the image of polydisc $\D^n$ and the closed polydisc $\overline{\mathbb D^n}$ respectively under the symmetrization map $\pi_n$. We obtain from the literature (see \cite{costara1, edi-zwo}) that
\[
\gamn=\overline{\gn} :=\left\{ \left(\sum_{1\leq i\leq n} z_i,\sum_{1\leq
i<j\leq n}z_iz_j \dots, \prod_{i=1}^n z_i \right): \,|z_i|\leq 1,
i=1,\dots,n \right \}.
\]
In \cite{costara1}, Costara describe the points in $\gn$ and $\gamn$ in the following way:
\[
\gn = \ls (s_1,\dots, s_{n-1}, p)\in \C^n : p\in \D,\; s_j=\be_j +
\bar\be_{n-j}p \: \text{ and } \: (\be_1, \dots, \be_{n-1}) \in
\mathbb G_{n-1} \rs .
\]
\[
\gamn=\overline{\gn} = \ls (s_1,\dots, s_{n-1}, p)\in \C^n : p\in \overline{\D},\; s_j=\be_j +
\bar\be_{n-j}p \: \text{ and } \: (\be_1, \dots, \be_{n-1}) \in
\Gamma_{n-1} \rs .
\]

Taking cue from Costara's characterization, we introduce a new family of domains, namely the \textit{extended symmetrized polydisc} $\Gn$, which we define in the following way:
\begin{align*}
 \widetilde{\mathbb G}_n : = \Bigg\{ (y_1,\dots,y_{n-1}, q)\in \C^n
:
|q|<1 , \:  y_j = \be_j + \bar \be_{n-j} q \: \text{ with }\;  |\beta_j| & +  |\beta_{n-j}| < {n \choose j},\\ & 1\leq j\leq n-1 \Bigg\}.
\end{align*}
It is obvious that if $(s_1,\dots, s_{n-1},p)\in \mathbb G_n$, then $|s_i|< {n \choose i}$. So, if $\lf \be_1,\dots, \be_{n-1} \rf \in \mathbb G_{n-1}$,
then $|\be_j| + |\be_{n-j}| < {n-1 \choose j}+{n-1 \choose n-j}= \nj $.  Therefore, it follows that $\gn \subseteq \Gn$. It is evident that $\widetilde{\mathbb G_1}={\mathbb G_1}=\D$ and we shall prove that $\widetilde{\mathbb G_2}={\mathbb G_2}$ and that $\mathbb G_n \subsetneq \widetilde{\mathbb G_n}$ for $n\geq 3$. We first characterize the points in $\Gn$ and its closure $\Gamn$ in several different ways. Then we use these characterizations to obtain analogous characterizations for the points in $\gn$ and its closure $\gamn$. To characterize the points in $\Gn$ and $\Gamn$, we introduce $n-1$ fractional linear transformations $\Phi_1,\dots ,\Phi_{n-1}$, which play pivotal role in this paper. Also we show that $\Gamn$ is neither convex nor circled but polynomially convex and starlike and hence is simply connected.\\

\noindent \textbf{Note.} This article is a part of authors' unpublished note \cite{pal-roy 3}. The techniques that are employed in the proofs of Theorems \ref{char G 3} and \ref{char F 3} are generalization in several variables of the techniques that are used in the article \cite{awy}.\\

\section{Characterizations of the points in $\Gn$ and $\Gamn$}

\vspace{0.3cm}

\noindent For characterizing the points in $\Gn$ and $\widetilde{\Gamma_n}$, we introduce $(n-1)$ fractional linear transformations $\Phi_1, \dots, \Phi_{n-1}$ in the following way.
\begin{defn}
For $z \in \C$, $y=(y_1,\dots,y_{n-1},q) \in \Cn $ and for any $j\in \left\{1,\dots,n-1\right\}$,
    let us define a meromorphic map
    \begin{equation} \label{defn-p}
        \Phi_j(z,y) =
        \begin{cases}
            \dfrac{{n \choose j}qz-y_j}{y_{n-j}z-{n \choose j}} & \q \text{ if } y_j y_{n-j}\neq {n \choose j}^2 q \,, \\
            \\
            \dfrac{y_j}{{n \choose j}} & \q \text{ if } y_j y_{n-j} = {n \choose j}^2 q \, ,
        \end{cases}
    \end{equation}
    \\ and
    \begin{equation} \label{defn-D}
        D_j(y) =\underset{z \in \D}{\sup}|\Phi_j (z,y)|= \n \Phi_j(.,y) \n _{H^\infty} \, ,
    \end{equation}\\
    where ${H^\infty}$ denotes the Banach space of bounded complex-valued analytic functions on $\D$ equipped with supremum norm.
\end{defn}

Needless to mention that for a fixed $y=(y_1,\dots, y_{n-1},q)\in \Cn$, the function $\Phi_j(.,y)$ is a M\"{o}bius transformation. It is evident from the definition that $\Phi_j(.,y) = \Phi_{n-j}(.,\tilde y)$,
where
\[\tilde y = (y_1,\dots, y_{j-1}, y_{n-j},y_{j+1}, \dots, y_{n-j-1}, y_j,y_{n-j+1}, \dots,y_{n-1},q) \in \C^n,
\]
which is obtained by interchanging the $i$-th and the $j$-th coordinates of $y$. It can be shown by a few steps of calculations that if $|y_{n-j}| \neq \nj$, then the image set $\Phi_j(\mathbb T, y)$ is a circle whose center and radius are
\begin{equation}\label{cent-radi formula}
    \dfrac{{n \choose j} \left(y_j - \bar y_{n-j} q \right)}{{n \choose j}^2 - |y_{n-j}|^2} \q \text{and} \q \dfrac{\left|y_j y_{n-j} - {n \choose j}^2 q \right|}{{n \choose j}^2 - |y_{n-j}|^2}
\end{equation}
respectively. If $|y_{n-j}|< {n \choose j}$, then $\Phi_j(.,y)$ is bounded on $\D$. By the maximum modulus principle, the set $\Phi_j(\D,y)$ must be lying inside the circle $\Phi_j(\mathbb{T},y)$ for that $j\in \{1 ,\dots,n-1\}$. Now by continuity of the function $\Phi_j(.,y)$, the set  $\Phi_j(\mathbb{D},y)$ is the open disc with boundary $\Phi_j(\mathbb{T},y)$. Hence for a point $y =(y_1,\dots, y_{n-1},q) \in \Cn$ and $j \in \{1 ,\dots,n-1\}$ if $|y_{n-j}|< {n \choose j}$, then the function $\Phi_j(.,y)$ maps $\D$ to the open disc with center and radius as in (\ref{cent-radi formula}). Note that the point of maximum modulus of a closed disc with center $a$ and radius $r$ has modulus $|a| +r$. Hence
\begin{equation} \label{formula-D}
    D_j(y)=\begin{cases}
        \dfrac{{n \choose j} \left|y_j - \bar y_{n-j} q \right| + \left|y_j y_{n-j} - {n \choose j}^2 q \right|}{{n \choose j}^2 - |y_{n-j}|^2 } & \q \text{ if } |y_{n-j}| < {n \choose j} \text{ and } y_j y_{n-j}\neq {n \choose j}^2 q \,, \\ \\
        \dfrac{|y_j|}{{n \choose j}} & \q \text{ if } y_j y_{n-j} = {n \choose j}^2 q \,, \\ \\
        \infty & \q \textrm{otherwise}\, .
    \end{cases}
\end{equation}

\subsection{Characterizations for the points in $\Gn$}\label{char Gn}
Being armed with the functions $\Phi_j$'s and $D_j$'s, we now characterize the points in $\Gn$. 

\begin{thm}\label{char G 3}
    For a point $y =(y_1,\dots, y_{n-1},q) \in \Cn$, the following are equivalent:
    \begin{enumerate}
        \item[$(1)$] $y \in \Gn$;\\
        \item[$(2)$] ${n \choose j} - y_j z - y_{n-j}w + { n \choose j} qzw \neq 0$, for all $z,w \in \overline\D$ and for all $j = 1, \dots, \left[\frac{n}{2}\right]$;\\
        \item[$(3)$] for all $j = 1, \dots, \left[\frac{n}{2}\right]$,
        \[
        \n \Phi_j(.,y)\n_{H^{\infty}} < 1 \; \text{and if } y_jy_{n-j}= {n \choose j}^2 q \; \text{ then,}
        \;\text{ in addition, }\; |y_{n-j}|< {n \choose j}
        \]
        \item[$(3')$] for all $j = 1, \dots, \left[\frac{n}{2}\right]$,
        \[
        \n \Phi_{n-j}(.,y)\n_{H^{\infty}} < 1 \; \text{and if } y_jy_{n-j}= {n \choose j}^2 q \; \text{ then,}
        \;\text{ in addition, }\; |y_j|< {n \choose j} ;
        \]
        \item[$(4)$] for all $j = 1, \dots, \left[\frac{n}{2}\right]$,
        \[
        {n \choose j}\left|y_j - \bar y_{n-j} q\right| + \left|y_j y_{n-j} - {n \choose j}^2 q \right| < {n \choose j}^2 -|y_{n-j}|^2
        \]
        \item[$(4')$] for all $j = 1, \dots, \left[\frac{n}{2}\right]$,
        \[
        {n \choose j}\left|y_{n-j} - \bar y_j q\right| + \left|y_j y_{n-j} - {n \choose j}^2 q \right| < {n \choose j}^2 -|y_j|^2 ;
        \]
        \item[$(5)$] for all $j = 1, \dots, \left[\frac{n}{2}\right]$,
        \[
        |y_j|^2 - |y_{n-j}|^2 + {n \choose j}^2|q|^2 + 2{n \choose j}\left|y_{n-j} - \bar y_j q\right| < {n \choose j}^2 \text{ and } \; |y_{n-j}|< {n \choose j}
        \]
        \item[$(5')$]for all $j = 1, \dots, \left[\frac{n}{2}\right]$,
        \[
        |y_{n-j}|^2 - |y_j|^2 + {n \choose j}^2|q|^2 + 2{n \choose j}\left|y_j - \bar y_{n-j} q\right| < {n \choose j}^2 \text{ and } \; |y_j|< {n \choose j} ;
        \]
        \item[$(6)$] $|q|<1$ and $|y_j|^2 + |y_{n-j}|^2 - {n \choose j}^2|q|^2 + 2\left| y_jy_{n-j} - {n \choose j}^2 q \right| < {n \choose j}^2$ for all $j = 1, \dots, \left[\frac{n}{2}\right]$;\\
        \item[$(7)$] $\left| y_{n-j} - \bar y_j q \right| + \left| y_j -\bar y_{n-j} q\right| < {n \choose j} (1 - |q|^2)$ for all $j = 1, \dots, \left[\frac{n}{2}\right]$;\\
        \item[$(8)$] there exist $\left[\frac{n}{2}\right]$ number of $2 \times 2$ matrices $B_1,\dots, B_{\left[\frac{n}{2}\right]}$ such that $\n B_j \n < 1$, $y_j = {n \choose j}[B_j]_{11} $, $y_{n-j} = {n \choose j}[B_j]_{22}$ for all $j = 1, \dots, \left[\frac{n}{2}\right]$ and
        \[\det B_1= \det B_2 = \cdots = \det B_{\left[\frac{n}{2}\right]}= q \; ;\]
        \item[$(9)$] there exist $\left[\frac{n}{2}\right]$ number of $2 \times 2$ symmetric matrices $B_1,\dots, B_{\left[\frac{n}{2}\right]}$ such that $\n B_j \n < 1$, $y_j = {n \choose j}[B_j]_{11} $, $y_{n-j} = {n \choose j}[B_j]_{22}$ for all $j = 1, \dots, \left[\frac{n}{2}\right]$ and \[\det B_1= \det B_2 = \cdots = \det B_{\left[\frac{n}{2}\right]}= q \; .\]
    \end{enumerate}
\end{thm}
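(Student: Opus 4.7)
The strategy is to exploit the fact that each of the nine conditions is a conjunction, over $j\in\{1,\dots,[n/2]\}$, of a sub-condition depending only on the three scalars $y_j$, $y_{n-j}$ and $q$. Thus the proof reduces to fixing such a $j$ and verifying the equivalence of the corresponding single-index sub-conditions; after rescaling $(y_j,y_{n-j},q)\mapsto(y_j/\binom{n}{j},y_{n-j}/\binom{n}{j},q)$, each condition becomes a variant of a known characterization of the tetrablock $\mathbb E$. The primed versions $(3')$, $(4')$, $(5')$ follow from the unprimed ones by the $j\leftrightarrow n-j$ symmetry recorded in Remark 2.2, so only the unprimed cycle needs to be closed.

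For the core algebraic chain $(1)\Leftrightarrow(7)\Leftrightarrow(4)\Leftrightarrow(5)\Leftrightarrow(6)$ I would begin with $(1)$. Under the assumption $|q|<1$ forced by $(1)$, the conjugate-linear system $y_j=\beta_j+\bar\beta_{n-j}q$, $y_{n-j}=\beta_{n-j}+\bar\beta_j q$ has the unique solution
\[
\beta_j=\frac{y_j-\bar y_{n-j}q}{1-|q|^2},\qquad \beta_{n-j}=\frac{y_{n-j}-\bar y_j q}{1-|q|^2},
\]
and the constraint $|\beta_j|+|\beta_{n-j}|<\binom{n}{j}$ is exactly $(7)$. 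The remaining equivalences are elementary algebra resting on the identity
\[
\binom{n}{j}^2|y_j-\bar y_{n-j}q|^2-|y_j y_{n-j}-\binom{n}{j}^2 q|^2=\Bigl(\binom{n}{j}^2-|y_{n-j}|^2\Bigr)\Bigl(|y_j|^2-\binom{n}{j}^2|q|^2\Bigr),
\]
which lets one trade the quantities $|y_j-\bar y_{n-j}q|$ and $|y_j y_{n-j}-\binom{n}{j}^2q|$ against the squares $|y_j|^2$, $|y_{n-j}|^2$, $|q|^2$; the positivity required to pass from one form to another (for instance $|y_{n-j}|<\binom{n}{j}$) is recovered from whichever condition is currently being assumed.

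The $\Phi_j$-based equivalences use the explicit formula (2.4) for $D_j$: when $|y_{n-j}|<\binom{n}{j}$ and $y_j y_{n-j}\neq\binom{n}{j}^2 q$, the bound $D_j(y)<1$ is literally $(4)$, while the degenerate case $y_j y_{n-j}=\binom{n}{j}^2 q$ is dispatched via the second branch of (2.4) and yields the side condition appearing in $(3)$. For $(2)$, direct manipulation shows
\[
\binom{n}{j}-y_j z-y_{n-j} w+\binom{n}{j}qzw=0 \ \iff\  w=\frac{1}{\Phi_{n-j}(z,y)}
\]
whenever the right-hand side is meaningful; hence $(2)$ is equivalent to $|\Phi_{n-j}(z,y)|<1$ for every $z\in\overline{\mathbb D}$, which by the maximum modulus principle agrees with $\|\Phi_{n-j}(\cdot,y)\|_{H^\infty}<1$, i.e.\ with $(3')$. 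The degenerate cases where numerator and denominator of $\Phi_{n-j}$ share a common zero, or where $\Phi_{n-j}$ is unbounded on $\overline{\mathbb D}$, are resolved by inspecting the polynomial in $(2)$ directly.

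Finally, for the matricial conditions, given $(1)$ I would construct for each $j$ the symmetric matrix
\[
B_j=\frac{1}{\binom{n}{j}}\begin{pmatrix} y_j & \gamma_j\\ \gamma_j & y_{n-j}\end{pmatrix},\qquad \gamma_j^2=y_j y_{n-j}-\binom{n}{j}^2 q,
\]
so that $\det B_j=q$ and the diagonal entries match; the bound $\|B_j\|<1$ is equivalent to $I-B_j^{*}B_j>0$, which after expanding the trace and determinant collapses to $(6)$. Conversely, from any such matrix one reads off $(6)$ by the same computation, closing the cycle and simultaneously yielding $(8)\Leftrightarrow(9)$. The main technical obstacle lies in the step $(2)\Leftrightarrow(3)/(3')$, since one must track carefully where the M\"obius family $\Phi_j(\cdot,y)$ is holomorphic and bounded on $\overline{\mathbb D}$, handle the three branches of formula (2.4), and isolate the degenerate loci $|y_j|=\binom{n}{j}$, $|y_{n-j}|=\binom{n}{j}$ and $y_j y_{n-j}=\binom{n}{j}^2 q$, which are precisely the places where $\Phi_j(\cdot,y)$ degenerates.
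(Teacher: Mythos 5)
Your proposal is correct and follows essentially the same route as the paper: the per-index reduction, the case split on $y_jy_{n-j}={n \choose j}^2 q$, the M\"obius-inversion link between $(2)$ and the $\Phi$-norm conditions, the explicit formula for $D_j$, and the symmetric matrices with off-diagonal entry squaring to $y_jy_{n-j}-{n \choose j}^2 q$ all coincide with the paper's argument (itself the several-variable version of the tetrablock computations of Abouhajar--White--Young). Your two local deviations are both sound and, if anything, streamline the original: the direct proof of $(1)\Leftrightarrow(7)$ via uniqueness of the solution $(\beta_j,\beta_{n-j})$ of the conjugate-linear system replaces the paper's detour $(1)\Rightarrow(5)\Rightarrow(2)\Rightarrow(7)\Rightarrow(1)$, and the displayed identity ${n \choose j}^2|y_j-\bar y_{n-j}q|^2-|y_jy_{n-j}-{n \choose j}^2q|^2=({n \choose j}^2-|y_{n-j}|^2)(|y_j|^2-{n \choose j}^2|q|^2)$ substitutes for the paper's $\sup_{z\in\mathbb T}$ computation in the $(4)$--$(5)$--$(6)$ chain, though you should still record (as the paper does via $(4)$ and $(4')$) that positivity of the diagonal of $I-B_j^*B_j$, not just of its determinant, is needed for $\|B_j\|<1$, and that $(8)\Rightarrow(6)$ for non-symmetric $B_j$ requires the inequality $|[B_j]_{12}|^2+|[B_j]_{21}|^2\geq 2|[B_j]_{12}[B_j]_{21}|$.
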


\vspace{0.3cm}

\begin{proof}
	First we prove the equivalence of the conditions $(2)-(6)$ and $(8), (9)$.\\
	
\noindent \textbf{Case-A:}	We first consider the case when $y_jy_{n-j} = {n \choose j}^2 q$. In this case, each of the conditions $(2)-(6)$ and $(8), (9)$ is equivalent to the pair of conditions $|y_j| < {n \choose j}$ and $|y_{n-j}|< {n \choose j}$.
	\begin{enumerate}
		\item[$(2)$]: Since we have
		\[
		0 \neq {n \choose j} - y_j z - y_{n-j}w + { n \choose j} qzw = {n \choose j} \left(1 -\dfrac{ y_j}{{n \choose j}} z\right)\left(1 -\dfrac{ y_{n-j}}
		{{n \choose j}} w\right)\,,
		\]
		the condition (2) is equivalent to (1). Otherwise if $|y_j | \geq {n \choose j}|$, we may take $z= {n \choose j} y_j^{-1} \in \overline{\mathbb D}$ or if $|y_{n-j}| \geq {n \choose j}$, we may take $w= {n \choose j} y_{n-j}^{-1} \in \overline{\mathbb D}$ to reach a contradiction to the equation (2).
		
		\item[$(3)$]: For the case $y_jy_{n-j} = {n \choose j}^2 q$, we have $\Phi_j(.,y)
		= \dfrac{y_j}{{n \choose j}}$. Thus $\n \Phi_j(.,y) \n < 1$ if and only if
		$|y_j|< {n \choose j}$.
		
		\item[$(3')$]: In this case, $\Phi_{n-j}(.,y) = \dfrac{y_{n-j}}{{n \choose j}}$.
		Thus $\n \Phi_{n-j}(.,y) \n < 1$ if and only if $|y_{n-j}|< {n \choose j}$.
		
		\item[$(4)$]: Since $y_jy_{n-j} = {n \choose j}^2 q$, the statement $(4)$ is equivalent to
		\begin{equation}\label{1}
		{n \choose j}\left|y_j - \bar y_{n-j} q\right| < {n \choose j}^2 -|y_{n-j}|^2
		 \Lra  \; \dfrac{|y_j|}{{n \choose j}}\left| {n \choose j}^2 - |y_{n-j}|^2 \right| < {n \choose j}^2 - |y_{n-j}|^2
		\end{equation}
		which is equivalent to $|y_{n-j}| < {n \choose j} \; \text{ and }\; |y_j|< {n \choose j}$. The reason of the forward implication is that, if $|y_{n-j}| > {n \choose j}$,
		then the left hand side of the inequality $\eqref{1}$ becomes positive while
		the right hand side becomes negative, a contradiction. Also, the inequality
		$\eqref{1}$ is not valid for the case $|y_{n-j}| = {n \choose j}$.
		Hence $|y_{n-j}| < {n \choose j}$ and consequently, by $\eqref{1}$, $|y_j| < {n \choose j}$.
		
		\item[$(4')$]: Similarly, statement $(4')$ is equivalent to the pair
		of conditions $|y_j| < {n \choose j} $ and $|y_{n-j}|< {n \choose j}$.
		
		\item[$(5)$]:
		\begingroup
		\allowdisplaybreaks
		\begin{align}\label{3}
		\nonumber & |y_j|^2 - |y_{n-j}|^2 + {n \choose j}^2|q|^2 +
		2{n \choose j}\left|y_{n-j} - \bar y_j q\right| < {n \choose j}^2\\
		\Lra & \; -\left( 1 - \dfrac{|y_j|^2}{{n \choose j}^2} \right)\left({n \choose j}^2+ |y_{n-j}|^2\right)
		+ 2{n \choose j}|y_{n-j}|\left|1 - \dfrac{|y_j|^2}{{n \choose j}^2} \right| < 0\\
		\nonumber \imp & |y_j|< \nj.
		\end{align}
		\endgroup
		Also by hypothesis, $|y_{n-j}|< {n \choose j}$. Hence statement $(5)$ implies $|y_j| < {n \choose j}$ and $|y_{n-j}|< {n \choose j}$, when $y_j y_{n-j}= \nj^2 q$. On the other hand
		\begingroup
		\allowdisplaybreaks
		\begin{align*}
		& |y_j| < {n \choose j}\\
		\imp & \; -\left( 1 - \dfrac{|y_j|^2}{{n \choose j}^2} \right)\left({n \choose j} - |y_{n-j}|\right)^2 < 0\\
		\imp & \; -\left( 1 - \dfrac{|y_j|^2}{{n \choose j}^2} \right)\left({n \choose j}^2+ |y_{n-j}|^2\right) + 2{n \choose j}|y_{n-j}|\left|1 - \dfrac{|y_j|^2}{{n \choose j}^2} \right| < 0\\
		\Lra &\; |y_j|^2 - |y_{n-j}|^2 + {n \choose j}^2|q|^2 + 2{n \choose j}\left|y_{n-j} - \bar y_j q\right| < {n \choose j}^2.
		\end{align*}
		\endgroup
		Hence $|y_j| < {n \choose j}$ and $|y_{n-j}|< {n \choose j}$ together imply statement $(5)$ for the case $y_j y_{n-j} = \nj^2 q$.
		
		\item[$(5')$]: Similarly, statement $(5')$ is equivalent to the pair of conditions $|y_j| < {n \choose j} $ and $|y_{n-j}|< {n \choose j}$.
		
		\item[$(6)$]: The fact that $|y_j|^2 + |y_{n-j}|^2 - {n \choose j}^2|q|^2 + 2\left| y_jy_{n-j} - {n \choose j}^2 q \right| < {n \choose j}^2$ is equivalent to ${n \choose j}^2 \left(1 - \dfrac{|y_j|^2}{{n \choose j}^2}\right) + |y_{n-j}|^2\left(1 - \dfrac{|y_j|^2}{{n \choose j}^2}\right)>0$, which holds if and only if $${n \choose j}^2\left(1 - \dfrac{|y_j|^2}{{n \choose j}^2}\right) \left(1 - \dfrac{|y_{n-j}|^2}{{n \choose j}^2}\right) >0 .$$ Also $|q|<1 \Lra |y_j||y_{n-j}|<{n \choose j}^2$. Hence in this case, statement $(6)$ holds if and only if
		\begin{equation}\label{5}
		-{n \choose j}^2\left(1 - \dfrac{|y_j|^2}{{n \choose j}^2}\right) \left(1 - \dfrac{|y_{n-j}|^2}{{n \choose j}^2}\right) <0 \q \text{and} \q |y_j||y_{n-j}|<{n \choose j}^2.
		\end{equation}
		The first inequality of $\eqref{5}$ holds only if $\left(1 - \dfrac{|y_j|^2}{{n \choose j}^2}\right)$ and $\left(1 - \dfrac{|y_{n-j}|^2}{{n \choose j}^2}\right)$ have same sign (positive or negative). Then the second inequality of $\eqref{5}$ leads to the fact that both the inequalities in $\eqref{5}$ hold only if $|y_{n-j}| < {n \choose j}$ and $|y_j|< {n \choose j}$. If $|y_{n-j}| < {n \choose j}$ and $|y_j|< {n \choose j}$ hold then clearly $\eqref{5}$ holds. Therefore, the statement $(5)$ holds if and only if $|y_{n-j}| < {n \choose j}$ and $|y_j|< {n \choose j}$.
		
		\item[$(8)$]: Suppose condition $(8)$ holds. Then the matrix $B_j$, as mentioned in the statement of $(8)$, is of the form
		\[
		B_j = \begin{bmatrix}
		\dfrac{y_j}{{n\choose j}} & a_j \\
		\\
		b_j & \dfrac{y_{n-j}}{{n\choose j}}
		\end{bmatrix},
		\]
		where $a_j,b_j \in \C$ such that $\det B_j + a_jb_j = \dfrac{y_jy_{n-j}}{{n \choose j}^2}$. Then $a_j b_j= \dfrac{y_jy_{n-j}}{{n \choose j}^2} -\det B_j =\dfrac{y_jy_{n-j}}{{n \choose j}^2} -q = 0$. Therefore, $B_j$ is one of the following matrices
		\[
		\begin{bmatrix}
		\dfrac{y_j}{{n\choose j}} & * \\
		\\
		0 & \dfrac{y_{n-j}}{{n\choose j}}
		\end{bmatrix} \q \text{or} \q
		\begin{bmatrix}
		\dfrac{y_j}{{n\choose j}} & 0 \\
		\\
		* & \dfrac{y_{n-j}}{{n\choose j}}
		\end{bmatrix}.
		\]
		In any case $\n B_j \n < 1$ if and only if $|y_{n-j}| < {n \choose j}$ and $|y_j|< {n \choose j}$.
		
		\item[$(9)$]:
		Suppose condition $(9)$ holds. Then we must have
		\[
		B_j = \begin{bmatrix}
		\dfrac{y_j}{{n\choose j}} & 0 \\
		\\
		0 & \dfrac{y_{n-j}}{{n\choose j}}
		\end{bmatrix}.
		\]
		Note that, $\n B_j \n < 1$ if and only if $|y_{n-j}| < {n \choose j}$ and $|y_j|< {n \choose j}$.\\
	\end{enumerate}
	
\noindent \textbf{Case-B:}	Next we consider the other case when $y_jy_{n-j} \neq {n \choose j}^2 q$.
	First we prove the following :
	\[
	\begin{array}[c]{ccccc}
	(2)&\Lra&(3)&\Lra&(4)\\
	&  & \Updownarrow & & \\
	&  & (5)          & &
	\end{array}
	\q \text{ and } \q
	\begin{array}[c]{ccccc}
	(2)&\Lra&(3')&\Lra&(4')\\
	&  & \Updownarrow & & \\
	&  & (5')          & &
	\end{array}
	\]
	
	\noindent $(2)\Lra (3)$:
	Fix a $j \in \{1,\dots,n-1\}$. Condition $(2)$ then implies that
	\[
	1 -\dfrac{y_j}{{n \choose j}}z -\dfrac{y_{n-j}}{{n \choose j}} w + qzw \neq 0 \q \text{for all } z,w\in \overline{\mathbb{D}} ,
	\]
	which is equivalent to
	\[
	z\left(\dfrac{y_j}{{n \choose j}} - qw\right) \neq 1- \dfrac{y_{n-j}}{{n \choose j}}w \q \text{for all }  z,w\in \overline{\mathbb{D}} .
	\]
	The above mentioned condition holds only if $1- \dfrac{y_{n-j}}{{n \choose j}}w \neq 0$ for all $w \in \overline\D$, otherwise the choice $z = 0$ will lead to a contradiction. If $|y_{n-j}|< {n \choose j}$, then $\left|\dfrac{y_{n-j}}{{n \choose j}}w \right| < 1$ for all $w \in \overline\D$ and consequently $1- \dfrac{y_{n-j}}{{n \choose j}}w \neq 0$ for all $w \in \overline\D$. If $|y_{n-j}|\geq {n \choose j}$, then for $w= \dfrac{{n \choose j}}{y_{n-j}} \in \overline{\D}$ we have  $1- \dfrac{y_{n-j}}{{n \choose j}}w = 0$. Hence, $1- \dfrac{y_{n-j}}{{n \choose j}}w \neq 0$ for all $w \in \overline\D$ if and only if $|y_{n-j}|< {n \choose j}$. Therefore, condition $(1)$ holds if and only if
	\[
	|y_{n-j}|<{n \choose j}  \text{ and } 1\neq \dfrac{z\left(\dfrac{y_j}{{n \choose j}} - qw\right)}{1- \dfrac{y_{n-j}}{{n \choose j}}w} = z\Phi_j(w,y) \q \text{for all } z,w \in \overline \D,
	\]
	which is equivalent to
	\[
	|y_{n-j}|<{n \choose j}  \text{ and }  1\notin z\Phi_j(\overline{\mathbb{D}},y) \q \text{for all} \; z \in \overline{\mathbb{D}} .
	\]
	If $|y_{n-j}| < \nj$, then the function $\Phi_j(.,y)$ is bounded on $\D$. Then the fact that $1\notin z\Phi_j(\overline{\mathbb{D}},y)$ for all $z \in \overline{\mathbb{D}}$ implies that $\Phi_j(\overline{\mathbb{D}},y)$ does not intersect the complement of $\D$. Otherwise, there exists some $\alpha \in \overline \D$ such that $|\Phi_j(\alpha,y)| \geq 1$ and by choosing $z = \dfrac{1}{\Phi_j(\alpha,y)} \in \overline \D$ we will arrive at a contradiction. Thus the conditions $ |y_{n-j}| < \nj $ and $1\notin z\Phi_j(\overline{\mathbb{D}},y)$ for all $z \in \overline{\mathbb{D}}$ together imply that $\Phi_j(\overline{\mathbb{D}},y) \cap \D^c = \phi$. On the other hand if $\Phi_j(\overline{\mathbb{D}},y) \cap \D^c = \phi$, then $\Phi_j(.,y)$ is bounded on $\overline \D$ hence $|y_{n-j}| < \nj$, moreover $|\Phi_j(\alpha, y)|< 1$ for all $\alpha \in \overline\D$ and hence $|z\Phi_j(\alpha, y)| < 1$ for all $z,\alpha \in \overline\D$. Therefore, condition $(2)$ holds if and only if $\Phi_j(\overline{\mathbb{D}},y)$ does not intersect the complement of $\D$, for each $j = 1, \dots, \left[\frac{n}{2}\right]$. Note that $\Phi_j(\overline{\mathbb{D}},y) \cap \D^c = \phi$ if and only if $\n \Phi_j(.,y) \n < 1$. Thus condition $(2)$ holds if and only if condition $(3)$ holds.\\
	
	\noindent $(2)\Lra (3')$:
	Fixed a $j \in \{1,\dots,n-1\}$. Since condition $(2)$ is equivalent to
	\[
	w\left(\dfrac{y_{n-j}}{{n \choose j}} - qz\right) \neq 1- \dfrac{y_j}{{n \choose j}}z \q \text{for all }  z,w\in \overline{\mathbb{D}} ,
	\]
	by a similar argument as above, condition $(2)$ holds if and only if
	\[
	|y_j|<{n \choose j}  \text{ and }  1\notin w\Phi_{n-j}(\overline{\mathbb{D}},y) \q \text{for all} \; w \in \overline{\mathbb{D}} ,
	\]
	that is if and only if $\Phi_{n-j}(\overline{\mathbb{D}},y)$ does not intersect the complement of $\D$ which is same as saying $\n \Phi_{n-j}(.,y) \n < 1$.\\
	
	\noindent $(3) \Lra (4)$: Note that
	\[
	\n \Phi_j(.,y) \n_{H^{\infty}} = D_j(y) = \dfrac{{n \choose j} \left|y_j - \bar y_{n-j} q \right| + \left|y_j y_{n-j} - {n \choose j}^2 q \right|}{{n \choose j}^2 - |y_{n-j}|^2 }.
	\]
	Therefore $\n \Phi_j(.,y) \n_{H^{\infty}} < 1$ if and only if \[
	{n \choose j} \left|y_j - \bar y_{n-j} q \right| + \left|y_j y_{n-j} - {n \choose j}^2 q \right| < {n \choose j}^2 - |y_{n-j}|^2.
	\]
	Thus conditions $(3)$ and $(4)$ are equivalent.\\
	
	\noindent $(3') \Lra (4')$: Similarly, this equivalence is clear from the following fact
	\[
	\n \Phi_{n-j}(.,y) \n_{H^{\infty}} = D_{n-j}(y) = \dfrac{{n \choose j} \left|y_{n-j} - \bar y_j q \right| + \left|y_j y_{n-j} - {n \choose j}^2 q \right|}{{n \choose j}^2 - |y_j|^2 }.
	\] 
	
	\noindent $(3) \Lra (5)$:
	Fix a $j \in \{1,\dots,n-1\}$. Suppose condition $(3)$ holds. Since $y_jy_{n-j} \neq {n \choose j}^2 q$, the inequality $\n \Phi_j(.,y) \n < 1$ (that is $\Phi_j(.,y)$ is bounded on $\D$) implies $|y_{n-j}| < {n \choose j}$. Again if
	$|y_{n-j}| < {n \choose j}$ holds, then $y_{n-j}z - {n \choose j} \neq 0$ for any $z \in \overline\D$ and consequently the inequality
	\[
	\left|{n \choose j} qz - y_j \right| < \left|y_{n-j}z - {n \choose j}\right| \q \text{ for all } \; z \in \overline{\D}
	\]
	implies $\n \Phi_j(.,y) \n < 1$. Then by the maximum modulus principle, condition $(2)$ holds if and only if
	\[
	|y_{n-j}| < {n \choose j} \q \text{ and } \q \left|{n \choose j} qz - y_j \right| < \left|y_{n-j}z - {n \choose j} \right| \q \text{ for all } \; z \in \T.
	\]
	Note that:
	\begingroup
	\allowdisplaybreaks
	\begin{align}\label{6}
	\nonumber & \left|{n \choose j} qz - y_j \right| < \left|y_{n-j}z - {n \choose j} \right| \q \text{ for all } \; z \in \T \\
	\nonumber \Lra & \; {n \choose j}^2|q|^2 + |y_j|^2 - |y_{n-j}|^2 + 2 {n \choose j}\operatorname{Re}\left(z(y_{n-j}- \bar y_j q )\right) < {n \choose j}^2 \q \text{ for all } \; z \in \T \\
	\Lra & \; |y_j|^2 - |y_{n-j}|^2 +{n \choose j}^2|q|^2  +2 {n \choose j}|y_{n-j}- \bar y_j q | < {n \choose j}^2 \; .
	\end{align}
	\endgroup
	(since $|x| < k $ if and only if $\operatorname{Re}(zx)< k$ for all $z \in \mathbb{T}$.)
	Hence condition $(3)$ holds if and only if $|y_{n-j}| < {n \choose j}$ and inequality $\eqref{6}$ hold, that is, if and only if condition $(5)$ holds.\\
	
	\noindent $(3') \Lra (5')$:
	Using the same line of argument we can show it.
	
	Thus for the case $y_jy_{n-j} \neq {n \choose j}^2 q$ conditions $(2), (3),(3'),(4), (4'),(5),(5')$ are equivalent. To complete the first part of the proof we show the equivalence of $(2),(6),(8)$ and $(9)$.
	
	\noindent $(2) \Lra (6)$: write $\hat y = (\hat y_1, ...,\hat y_{n-1}, \widehat q)$, where $\hat y_{_k} = y_{_k}$ for all $k \neq n-j$, $\hat y_{n-j} = {n \choose j}\bar q$ and $\widehat q = \dfrac{\bar y_{n-j}}{{n \choose j}}$. That is,
	\[
	\hat y = \left(y_1,\dots, y_{n-j-1},{n \choose j}\bar q, y_{n-j+1},\dots , y_{n-1}, \dfrac{\bar y_{n-j}}{{n \choose j}} \right).
	\]
	First note that replacing $y_{n-j}$ and $q$ by ${n \choose j}\bar q$ and $\dfrac{\bar y_{n-j}}{{n \choose j}}$ respectively, makes no difference in the inequality in condition $(5')$. Thus, condition $(5')$ holds for the point $y$ ( and for the chosen $j$) if and only if $(5')$ holds for the point $\hat y$. Since we already proved the equivalence of $(2)$, $(5)$ and $(5')$, we can conclude that the inequality in condition $(2)$ holds for the point $y$ if and only if the inequalities in $(5)$ hold for the point $\hat y$. Thus, condition $(2)$ holds if and only if the following holds :
	\[
	|y_j|^2 - \left| {n \choose j} \bar q \right|^2 + \nj^2 \left| \dfrac{\bar y_{n-j}}{{n \choose j}} \right|^2 + 2 \nj \left| {n \choose j}\bar q - \bar y_j \dfrac{\bar y_{n-j}}{{n \choose j}} \right| < \nj^2  \text{and} \q \left|{n \choose j} \bar q \right|< {n \choose j},
	\]
	which is same as saying that
	\[
	|y_j|^2 + |y_{n-j}|^2 - {n \choose j}^2|q|^2  + 2\left|y_j y_{n-j} - {n \choose j}^2  q \right| < {n \choose j}^2 \q \text{and} \q |q|<1.
	\]
	Hence $(2) \Lra (6)$. To complete the rest of the proof we need the following lemma, which can be proved by simple calculations.
	\begin{lemma}\label{matrix-det}
		If
		\begin{equation*}
		B_j = \begin{bmatrix}
		\dfrac{y_j}{{n \choose j}} & b_j \\
		\\
		c_j & \dfrac{y_{n-j}}{{n \choose j}}
		\end{bmatrix}
		\end{equation*}
		where $b_jc_j = \dfrac{y_jy_{n-j}}{{n \choose j}^2} -q $. Then $ \det B_j = q$,
		\begin{equation}\label{matrix-B}
		1-B_j^*B_j =
		\begin{bmatrix}
		1-\dfrac{|y_j|^2}{{n \choose j}^2} - |c_j|^2 & & -b_j\dfrac{\bar y_j}{{n \choose j}} - \bar c_j\dfrac{y_{n-j}}{{n \choose j}} \\
		\\
		-\bar b_j\dfrac{y_j}{{n \choose j}} - c_j\dfrac{\bar y_{n-j}}{{n \choose j}} & & 1-\dfrac{|y_{n-j}|^2}{{n \choose j}^2} - |b_j|^2
		\end{bmatrix}
		\end{equation}
		and
		\begin{equation}\label{det-B}
		\det (1-B_j^*B_j) = 1-\dfrac{|y_j|^2}{{n \choose j}^2} -\dfrac{|y_{n-j}|^2}{{n \choose j}^2} +|q|^2 - |b_j|^2 - |c_j|^2 \, .
		\end{equation}
	\end{lemma}
	
	\noindent Let us get back to the proof of Theorem \ref{char G 3}.\\
	
	\noindent$(6)\imp (9)$:
	Suppose $(6)$ holds. Consider the matrix
	\begin{equation*}
	B_j = \begin{bmatrix}
	\dfrac{y_j}{{n \choose j}} & k_j \\
	\\
	k_j & \dfrac{y_{n-j}}{{n \choose j}}
	\end{bmatrix}
	\end{equation*}
	where $k_j$ is the square root(any) of $\left(\dfrac{y_jy_{n-j}}{{n \choose j}^2} -q\right) $. Then $\det B_j = q$. By Lemma $\ref{matrix-det}$, the diagonals of $1 - B_j^*B_j$ are the following :
	\[
	1-\dfrac{|y_j|^2}{{n \choose j}^2} - \left|\dfrac{y_jy_{n-j}}{{n \choose j}^2} -q\right| \; \text{ and } \; 1-\dfrac{|y_{n-j}|^2}{{n \choose j}^2} - \left|\dfrac{y_jy_{n-j}}{{n \choose j}^2} -q\right|.
	\]
	We have proved that condition $(2)$ is equivalent to conditions $ (4), (4')$ and $(6)$. Thus if condition $(6)$ holds, then conditions $(4)$ and $(4')$ also hold. Then by conditions $(4)$ and $(4')$, we have
	\[
	1-\dfrac{|y_j|^2}{{n \choose j}^2} - \left|\dfrac{y_jy_{n-j}}{{n \choose j}^2} -q\right|> 0 \; \text{ and } \; 1-\dfrac{|y_{n-j}|^2}{{n \choose j}^2} - \left|\dfrac{y_jy_{n-j}}{{n \choose j}^2} -q\right|> 0,
	\]
	that is, the diagonals of $1 - B_j^*B_j$ are positive. Again by Lemma $\ref{matrix-det}$ and condition $(6)$, we have
	\[
	\det (1-B_j^*B_j)=  1-\dfrac{|y_j|^2}{{n \choose j}^2} -\dfrac{|y_{n-j}|^2}{{n \choose j}^2} +|q|^2 - 2 \left|\dfrac{y_jy_{n-j}}{{n \choose j}^2} -q\right| > 0.
	\]
	Thus $\n B_j \n < 1$. Also we have $y_j = {n \choose j}[B_j]_{11} $ and $y_{n-j} = {n \choose j}[B_j]_{22}$. Hence condition $(9)$ is satisfied.\\
	
	\noindent $(9) \imp (8)$ : Obvious.\\
	
	\noindent $(8) \imp (6)$ : Suppose $(8)$ holds. Then  $B_j$ is a $2 \times 2$ matrix such that $[B_j]_{11}= \dfrac{y_j}{{n \choose j}}$, $[B_j]_{22}= \dfrac{y_{n-j}}{{n \choose j}}$ and $[B_j]_{12}[B_j]_{21} = \dfrac{y_jy_{n-j}}{{n \choose j}^2} -q $. Since $ \left( \left|[B_j]_{12}\right| - \left| [B_j]_{21} \right| \right)^2 \geq 0 $, we have
	\[
	\left|[B_j]_{12}\right|^2 + \left|[B_j]_{21}\right|^2 \geq 2 \left|[B_j]_{12}[B_j]_{21}\right| = 2 \left|\dfrac{y_jy_{n-j}}{{n \choose j}^2} -q\right|.
	\]
	Since $\n B_j \n < 1$, by lemma $\ref{matrix-det}$ we have
	\begin{align*}
	& 1-\dfrac{|y_j|^2}{{n \choose j}^2} -\dfrac{|y_{n-j}|^2}{{n \choose j}^2} +|q|^2 - 2 \left|\dfrac{y_jy_{n-j}}{{n \choose j}^2} -q\right|\\
	\geq & \; 1-\dfrac{|y_j|^2}{{n \choose j}^2} -\dfrac{|y_{n-j}|^2}{{n \choose j}^2} +|q|^2 - |[B_j]_{12}|^2 - |[B_j]_{21}|^2 \\
	= & \; \det (1-B_j^*B_j) \\
	> & \; 0.
	\end{align*}
	Therefore,
	\[
	|y_{n-j}|^2 + |y_j|^2 - {n \choose j}^2|q|^2 + 2\left|y_j y_{n-j}- \nj^2 q\right| < {n \choose j}^2 .
	\]
	The fact $\n B_j \n < 1$ implies $|\det B_j| <1 $, and thus $|q| < 1$. Hence $(6)$ holds and the first part of the proof is complete.\\
	
To complete the rest of the proof we show the following : $(1)\Rightarrow(2) \Rightarrow (7)\Rightarrow (1)$.\\
\noindent $(1) \imp (2)$:
Suppose $y =(y_1,\dots, y_{n-1},q) \in \Gn$. Then $|q| < 1$, and there exists $(\be_1,\dots,\be_{n-1}) \in \C^{n-1}$ such that for each $j \in \left\{ 1, \dots, \left[\frac{n}{2}\right] \right\}$
\[
|\be_j|+ |\be_{n-j}| < \nj, \q y_j = \be_j + \bar \be_{n-j} q \q \text{and}\q y_{n-j} = \be_{n-j} + \bar \be_j q.
\]
Hence
\[
|y_{n-j}| = |\beta_{n-j} + \bar\beta_j q| \leq |\beta_{n-j}| + |\beta_j| < {n \choose j}.
\]
Then, for each $j=1,\dots, [\frac{n}{2}]$,
\begingroup
\allowdisplaybreaks
\begin{align*}
|y_j|^2 -|y_{n-j}|^2 & = (\beta_j + \bar\beta_{n-j} q)(\bar\beta_j + \beta_{n-j} \bar q) - (\beta_{n-j} + \bar\beta_j q)(\bar\beta_{n-j} + \beta_j \bar q)\\
& = |\beta_j|^2(1 - |q|^2) - |\beta_{n-j}|^2(1 - |q|^2)\\
&  < {n \choose j} (|\beta_j| -|\beta_{n-j}|)(1 - |q|^2)
\end{align*}
\endgroup
and
\[
y_{n-j} - \bar y_j q = \beta_{n-j} + \bar\beta_j q - (\bar\beta_j + \beta_{n-j} \bar q)q = \beta_{n-j}(1 - |q|^2).
\]
Hence, for all $j=1,\dots, [\frac{n}{2}]$,
\[
|y_j|^2 -|y_{n-j}|^2 - 2 {n \choose j} |y_{n-j} - \bar y_j q|     < {n \choose j} (|\beta_j| + |\beta_{n-j}|)(1 - |q|^2) < {n \choose j}^ 2 (1 - |q|^2).
\]
Thus, for all $j=1,\dots, [\frac{n}{2}]$,
\begin{equation}\label{charG2 1}
|y_j|^2 -|y_{n-j}|^2 + \nj^2|q|^2 - 2 {n \choose j} |y_{n-j} - \bar y_j q| < \nj^2.
\end{equation}
Therefore, condition $(5)$ holds. Then by equivalence of conditions $(2)$ and $(5)$, we have
\[
{n \choose j} - y_j z - y_{n-j}w + { n \choose j} qzw \neq 0 \q \text{for all } z,w \in \overline\D \;  \text{ and for all } j=1,\dots, [\frac{n}{2}]
\]
Hence $(2)$ holds.\\

\noindent$(2) \imp (7)$:
Suppose condition $(2) $ holds. Then by conditions $(5)$ and $(5')$,
\begin{align*}
& |y_j|^2 - |y_{n-j}|^2 + {n \choose j}^2|q|^2 + 2{n \choose j}\left|y_{n-j} - \bar y_j q\right| < {n \choose j}^2  \\
\text{and} \q  & |y_{n-j}|^2 - |y_j|^2 + {n \choose j}^2|q|^2 + 2{n \choose j}\left|y_j - \bar y_{n-j} q\right| < {n \choose j}^2 .
\end{align*}
By adding the above two inequalities we get
\begin{equation}\label{charG2 2}
\left| y_{n-j} - \bar y_j q \right| + \left| y_j -\bar y_{n-j} q\right| < {n \choose j} (1 - |q|^2).
\end{equation}
Since $j$ is chosen arbitrarily, inequality $\ref{charG2 2}$ holds for each $j \in \left\{ 1, \dots, \left[\frac{n}{2}\right] \right\}$. Hence $(2) \imp (7)$. Further note that if inequation in condition $(2)$ holds for some $j \in \left\{ 1, \dots, \left[\frac{n}{2}\right] \right\}$, then the inequality in condition $(7)$ holds for the same $j$.\\

\noindent $(7) \imp (1)$:
Suppose condition $(7)$ holds. Clearly $|q| <1$. For each $j = 1, \dots, \left[\frac{n}{2}\right] $, consider
\[
\beta_j = \dfrac{y_j - \bar y_{n-j}q}{1 -|q|^2} \q \text{and} \q \beta_{n-j} = \dfrac{y_{n-j} - \bar y_jq}{1 -|q|^2}.
\]
Then
$
\beta_j + \bar{\beta}_{n-j} q = y_j \q \text{and} \q \be_{n-j} + \bar{\be}_j q = y_{n-j} ,
$
for all $j = 1, \dots, \left[\frac{n}{2}\right]$ which is same as saying that $\beta_j + \bar{\beta}_{n-j} q = y_j $,
for all $j = 1, \dots , n-1$. By condition $(7)$ of this theorem, we have
\[ |\beta_j| + |\beta_{n-j}| < {n \choose j}\q \text{for each } j = 1, \dots, \left[\frac{n}{2}\right] .\]
Hence for the point $y =(y_1,\dots, y_{n-1},q) \in \Cn $, there exists $(\be_1, \dots, \be_{n-1}) \in \C^{n-1}$ such that $ y_j = \beta_j + \bar{\beta}_{n-j} q$ and $|\beta_j| + |\beta_{n-j}| < {n \choose j}$ for all $j =  1, \dots, n-1 $. Therefore $y \in \Gn$, and the proof is complete.

\end{proof}

\subsection{Characterizations of a point in $\Gamn$}\label{char Gamn}
In this subsection, we state and prove an analog of Theorem $\ref{char G 3}$ for $\Gamn \,(=\overline{\Gn})$. 

\begin{thm}\label{char F 3}
    For a point $y =(y_1,\dots, y_{n-1},q) \in \Cn$, the following are equivalent:
    \begin{enumerate}
        \item[$(1)$] $y \in \Gamn$;
        \item[$(2)$] ${n \choose j} - y_j z - y_{n-j}w + { n \choose j} qzw \neq 0$, for all $z,w \in \D$ and for all $j = 1, \dots, \left[\frac{n}{2}\right]$;
        \item[$(3)$] for all $j = 1, \dots, \left[\frac{n}{2}\right]$,
        \[
        \n \Phi_j(.,y)\n_{H^{\infty}} \leq 1 \; \text{and if } y_jy_{n-j}= {n \choose j}^2 q \; \text{ then,}
        \;\text{ in addition, }\; |y_{n-j}|\leq {n \choose j}
        \]
        \item[$(3')$] for all $j = 1, \dots, \left[\frac{n}{2}\right]$,
        \[
        \n \Phi_{n-j}(.,y)\n_{H^{\infty}} \leq 1 \; \text{and if } y_jy_{n-j}= {n \choose j}^2 q \; \text{ then,}
        \;\text{ in addition, }\; |y_j|\leq {n \choose j} ;
        \]
        \item[$(4)$] for all $j = 1, \dots, \left[\frac{n}{2}\right]$,
        \begin{align*}
            &{n \choose j}\left|y_j - \bar y_{n-j} q\right| + \left|y_j y_{n-j} - {n \choose j}^2 q \right| \leq {n \choose j}^2 -|y_{n-j}|^2 \; \; \text{ and if } y_jy_{n-j}= {n \choose j}^2 q \\ & \text{ then,}
            \;\text{ in addition, }\; |y_j|\leq {n \choose j}
        \end{align*}
        \item[$(4')$] for all $j = 1, \dots, \left[\frac{n}{2}\right]$
        \begin{align*}
            &{n \choose j}\left|y_{n-j} - \bar y_j q\right| + \left|y_j y_{n-j} - {n \choose j}^2 q \right| \leq {n \choose j}^2 -|y_j|^2 \; \; \text{ and if } y_jy_{n-j}= {n \choose j}^2 q \\ & \text{ then,}
            \;\text{ in addition, }\; |y_{n-j}|\leq {n \choose j} ;
        \end{align*}
        \item[$(5)$] for all $j = 1, \dots, \left[\frac{n}{2}\right]$, 
        \[
        |y_j|^2 - |y_{n-j}|^2 + {n \choose j}^2|q|^2 + 2{n \choose j}\left|y_{n-j} - \bar y_j q\right| \leq {n \choose j}^2 \text{ and } \; |y_{n-j}|\leq {n \choose j}
        \]
        \item[$(5')$] for all $j = 1, \dots, \left[\frac{n}{2}\right]$
        \[
        |y_{n-j}|^2 - |y_j|^2 + {n \choose j}^2|q|^2 + 2{n \choose j}\left|y_j - \bar y_{n-j} q\right| \leq {n \choose j}^2 \text{ and } \; |y_j|\leq {n \choose j} ;
        \]
        \item[$(6)$] $|q|\leq 1$ and $|y_j|^2 + |y_{n-j}|^2 - {n \choose j}^2|q|^2 + 2\left| y_jy_{n-j} - {n \choose j}^2 q \right| \leq {n \choose j}^2$ for all $j = 1, \dots, \left[\frac{n}{2}\right]$;
        \item[$(7)$] $\left| y_{n-j} - \bar y_j q \right| + \left| y_j -\bar y_{n-j} q\right| \leq {n \choose j} (1 - |q|^2)$ for all $j = 1, \dots, \left[\dfrac{n}{2}\right]$, and if $ |q|=1$ then, in addition, $|y_j|\leq {n \choose j}$ for all $j = 1, \dots, \left[\frac{n}{2}\right]$;
        \item[$(8)$] there exist $\left[\frac{n}{2}\right]$ number of $2 \times 2$ matrices $B_1,\dots, B_{\left[\frac{n}{2}\right]}$ such that $\n B_j \n \leq 1$, $y_j = {n \choose j}[B_j]_{11} $, $y_{n-j} = {n \choose j}[B_j]_{22}$ for all $j = 1, \dots, \left[\frac{n}{2}\right]$ and \[\det B_1= \det B_2 =\cdots= \det B_{\left[\frac{n}{2}\right]}= q \; ;\]
        \item[$(9)$] there exist $\left[\frac{n}{2}\right]$ number of $2 \times 2$ symmetric matrices $B_1,\dots, B_{\left[\frac{n}{2}\right]}$ such that $\n B_j \n \leq 1$, $y_j = {n \choose j}[B_j]_{11} $, $y_{n-j} = {n \choose j}[B_j]_{22}$ for all $j = 1, \dots, \left[\frac{n}{2}\right]$ and \[\det B_1= \det B_2 =\cdots= \det B_{\left[\frac{n}{2}\right]}= q. \]

    \end{enumerate}
\end{thm}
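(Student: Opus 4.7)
The plan is to adapt the proof of Theorem \ref{char G 3} to the closure, replacing each strict inequality by its non-strict counterpart and inserting the extra boundary clauses as dictated by the statement. Since $\Gamn=\overline{\Gn}$ by definition, the equivalence $(1)\Leftrightarrow(10)$ can be obtained by a limit/scaling argument: for $(1)\Rightarrow(10)$, pick $y^{(k)}\in\Gn$ converging to $y$, use the defining $\beta_j^{(k)}$ (which are uniformly bounded by $\nj$), and extract a subsequential limit satisfying the non-strict estimates; for $(10)\Rightarrow(1)$, rescaling the $\beta_j$'s to $r\beta_j$ and $q$ to $rq$ with $r<1$ produces data satisfying the strict inequalities in the definition of $\Gn$, so that the resulting point of $\Gn$ converges to $y$ as $r\uparrow 1$.

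With $(1)\Leftrightarrow(10)$ in hand, the remaining equivalences among $(2)$--$(9)$ are obtained by transcribing the proof of Theorem \ref{char G 3} with $<$ replaced by $\leq$. The algebraic identities that relate $\Phi_j(\cdot,y)$, $D_j(y)$, and the expressions such as $|y_j-\bar y_{n-j}q|$ to one another are insensitive to whether the inequality is strict, so the generic-case arguments (for $y_jy_{n-j}\neq\nj^2 q$) carry over verbatim. Lemma \ref{matrix-det} continues to supply the equivalences $(2)\Leftrightarrow(8)\Leftrightarrow(9)$ once strict positivity of the diagonal entries and determinant of $I-B_j^*B_j$ is relaxed to nonnegativity, and the outer loop $(1)\Rightarrow(2)\Rightarrow(7)\Rightarrow(10)\Rightarrow(1)$ closes in the same fashion.

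The main technical obstacle is the handling of the degenerate cases, which forces the additional clauses appearing in $(3),(3'),(4),(4'),(5),(5'),(7)$. When $y_jy_{n-j}=\nj^2 q$, the M\"obius transformation $\Phi_j(\cdot,y)$ collapses to the constant $y_j/\nj$, so $\|\Phi_j(\cdot,y)\|_{H^\infty}\leq 1$ only encodes $|y_j|\leq\nj$ and the complementary bound $|y_{n-j}|\leq\nj$ must be posited separately, exactly as in the open-case argument. The genuinely new difficulty in the closed setting lies in the supplementary condition $|y_j|\leq\nj$ attached to $(7)$ when $|q|=1$: the formula $\beta_j=(y_j-\bar y_{n-j}q)/(1-|q|^2)$ that drives the step $(7)\Rightarrow(1)$ in Theorem \ref{char G 3} becomes indeterminate, but $(7)$ with $|q|=1$ forces $y_j=\bar y_{n-j}q$ and $y_{n-j}=\bar y_j q$, after which the choice $\beta_j=y_j$, $\beta_{n-j}=0$ realizes $(10)$ precisely because $|y_j|\leq\nj$. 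I expect this boundary bookkeeping, rather than any genuinely new analytical idea, to be where the care is required.
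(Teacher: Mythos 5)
Your proposal is correct and, in all essentials, matches the paper's own proof: the equivalences among $(2)$--$(9)$ are obtained exactly as you describe, by rerunning the argument of Theorem \ref{char G 3} with non-strict inequalities and treating the degenerate case $y_jy_{n-j}=\nj^2 q$ separately (where each condition collapses to $|y_j|\leq\nj$ and $|y_{n-j}|\leq\nj$, which is why the extra clauses in $(3),(3'),(4),(4')$ appear), and the step $(7)\Rightarrow(10)$ when $|q|=1$ is handled exactly as you predict --- the inequality forces $y_j=\bar y_{n-j}q$ and $y_{n-j}=\bar y_jq$, and the paper takes $\beta_j=r_jy_j$, $\beta_{n-j}=(1-r_j)y_{n-j}$ for any $r_j\in[0,1]$, of which your choice $\beta_j=y_j$, $\beta_{n-j}=0$ is the case $r_j=1$. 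The one structural difference is where condition $(1)$ is tied into the cycle: you anchor it at $(10)$ via a compactness argument (extract convergent $\beta^{(k)}$ from an approximating sequence in $\Gn$, and conversely rescale $\beta_j\mapsto r\beta_j$, $q\mapsto rq$), whereas the paper anchors it at $(2)$ by observing that the non-vanishing condition on $\mathbb D$ implies the strict condition of Theorem \ref{char G 3} for the dilated point $(ry_1,\dots,ry_{n-1},r^2q)$, $r\in(0,1)$, and for the converse uses continuity of $\Phi_j(w,\cdot)$ together with $|\Phi_j(w,\cdot)|<1$ on $\Gn$; condition $(10)$ is then reached only through $(2)\Rightarrow(7)\Rightarrow(10)\Rightarrow(2)$. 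Both anchorings are valid and the surrounding work is identical, so this is a cosmetic rather than substantive deviation.
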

\begin{proof}
	\noindent $(1)\Lra (2)$:
	Suppose $(2)$ holds. For any $\zeta , \eta \in \overline{\mathbb{D}} $ and for any $r\in (0,1)$ we have $r\zeta, r\eta \in \D$. Hence
	\[
	{n \choose j} - y_j r\zeta - y_{n-j}r\eta + { n \choose j} qr^2\zeta\eta \neq 0\q \text{for all } j = 1, \dots, \left[\frac{n}{2}\right].
	\]
	Since the above is true for any $\zeta , \eta \in \overline{\mathbb{D}} $ and any $r\in (0,1)$, by Theorem $\ref{char G 3}$, we have \[ (ry_1,ry_2, \dots, ry_{n-1},r^2q) \in \Gn\] for any $r\in (0,1)$. Therefore, $y =(y_1,\dots, y_{n-1},q) \in \overline{\Gn}=\Gamn$.\\
	
	Conversely, suppose $y =(y_1,\dots, y_{n-1},q) \in \Gamn$. Also suppose $(2)$ does not hold, that is, for some $j = 1, \dots, \left[\frac{n}{2}\right]$
	\begin{align*}
	&{n \choose j} - y_j z - y_{n-j}w + { n \choose j} qzw = 0 \q \text{for some } z,w \in \D \\
	\imp & \; z\left({n \choose j} q w - y_j \right) = \left( y_{n-j}w - {n \choose j} \right) \q \text{for some } z,w \in \D .
	\end{align*}
	Since $y \in \Gamn$, we have $|y_{n-j}| \leq \nj$ and hence $y_{n-j}w - {n \choose j} \neq 0$ for all $w \in \D$. Therefore,
	\[ z\dfrac{{n \choose j} q w - y_j }{ y_{n-j}w - {n \choose j} } = z \Phi_j(w,y) = 1 \q \text{for some } z,w \in \D \]
	and consequently $|\Phi_j(w,y)|>1$ for some $w \in \D $. Again by Theorem \ref{char G 3} we have that, $|\Phi_j(w,\zeta)| < 1$ whenever $w \in \D$ and $\zeta \in \Gn $. Since $ y\in \Gamn $, we must have $|\Phi_j(w,y)| \leq 1$ for any $w \in \D $, a contradiction. Hence $(2)$ must holds, if $y \in \Gamn$. \\

	Now we prove the equivalence of $(2)-(6)$ and $(8), (9)$.
	The proof of this part is similar to the proof of the corresponding part of Theorem $\ref{char G 3}$. For the case $y_y y_{n-j} = \nj^2 q$, each condition is equivalent to the pair of statements $|y_j| \leq {n \choose j}$ and $|y_{n-j}|\leq {n \choose j}$. The explanations for each case are similar to the proof (for the corresponding case) of Theorem $\ref{char G 3}$. In this case, we explain the reason for the parts $(4)$ and $(4')$ as they are slightly different form the similar parts in Theorem $\ref{char G 3}$.
	
	\item[$(4)$]: Since $y_jy_{n-j} = {n \choose j}^2 q$, we have the following:
	\begingroup
	\allowdisplaybreaks
	\begin{align}\label{11}
	\nonumber & {n \choose j}\left|y_j - \bar y_{n-j} q\right| \leq {n \choose j}^2 -|y_{n-j}|^2\\
	\nonumber \Lra & \; {n \choose j}\left|y_j - \bar y_{n-j} \dfrac{y_jy_{n-j}}{{n \choose j}^2} \right| \leq {n \choose j}^2 -|y_{n-j}|^2\\
	 \imp & \; |y_{n-j}| \leq {n \choose j}.
	\end{align}
	\endgroup
	Since $|y_{n-j}| > {n \choose j}$ provides a contradiction, as the left hand side of the inequality $\eqref{11}$ becomes positive while the right hand side becomes negative. Hence statement $(3)$ implies $|y_j| \leq \nj$ and $|y_{n-j}| \leq {n \choose j}$. On the other hand
	\begin{align*}
	&|y_j| < {n \choose j} \; \text{ and }\; |y_{n-j}|< {n \choose j}\\
	\imp & \; \dfrac{|y_j|}{{n \choose j}}\left| {n \choose j}^2 - |y_{n-j}|^2 \right| \leq {n \choose j}^2 - |y_{n-j}|^2\\
	\Lra & {n \choose j}\left|y_j - \bar y_{n-j} q\right| \leq {n \choose j}^2 -|y_{n-j}|^2.
	\end{align*}
	That is, $|y_j| \leq \nj$ and $|y_{n-j}| \leq {n \choose j}$ together imply statement $(4)$ for the case  $y_y y_{n-j} = \nj^2 q$.\\
	
	\item[$(4')$]: Similarly, statement $(4')$ is equivalent to the inequality
	\[
	\dfrac{|y_{n-j}|}{{n \choose j}}\left| {n \choose j}^2 - |y_j|^2 \right| < {n \choose j}^2 - |y_j|^2
	\]
	which implies $|y_j| < {n \choose j}$. Hence statement $(4')$ implies $|y_j| \leq \nj$ and $|y_{n-j}| \leq {n \choose j}$. Also, $|y_j| \leq \nj$ and $|y_{n-j}| \leq {n \choose j}$ together imply statement $(4')$ for this case.\\
	
	For the case  $y_y y_{n-j} \neq \nj^2 q$, the proof is similar to the corresponding parts of Theorem $\ref{char G 3}$.
	\noindent $(2)\imp (7)$: we can have similar proof as in the corresponding part of Theorem \ref{char G 3}.\\
	
	\noindent $(7)\imp (10)$:
	Suppose condition $(7)$ holds, then $|q| \leq 1$. For $|q|<1$, the part $(7)\Rightarrow(1)$ of the proof of Theorem \ref{char G 3} works here, that is, take
	\[
	\beta_j = \dfrac{y_j - \bar y_{n-j}q}{1 -|q|^2} \q \text{and} \q \beta_{n-j} = \dfrac{y_{n-j} - \bar y_jq}{1 -|q|^2} \q \text{for } j = 1,\dots, \lt \frac{n}{2} \rt
	\]
	to obtain statement $(10)$. If $|q|=1$, then condition $(3)$ implies $y_j = \bar{y}_{n-j} q$ and $y_{n-j} = \bar y_j q$, for all $j = 1, \dots, \left[\frac{n}{2}\right]$. Hence  $|y_{n-j}|=|y_j| \leq \nj$. Choose $r_1,\dots, r_{[\frac{n}{2}]} \in [0,1]$. Consider
	\[\be_j = r_j y_j \q \text{and} \q \be_{n-j} = (1-r_j) y_{n-j} \q \text{for all } j = 1, \dots, \left[\frac{n}{2}\right]. \]
	Then for all $j = 1, \dots, \left[\frac{n}{2}\right]$ we have the following
	\begin{align*}
	& |\be_j| + |\be_{n-j}| \leq {n \choose j}r_j + {n \choose j}(1-r_j) = {n \choose j},\\
	& \be_j + \bar{\be}_{n-j} q = r_j y_j + (1-r_j) \bar{y}_{n-j}q = r_j y_j + (1-r_j) y_j = y_j \\
	\text{and} \q & \be_{n-j} + \bar{\be}_j q = (1-r_j) y_{n-j} + r_j \bar{y}_jq = (1-r_j) y_{n-j} + r_j y_{n-j} = y_{n-j}.
	\end{align*}
	Thus for either cases, there exists $(\be_1, \dots, \be_{n-1}) \in \C^n$ such that $ y_j = \beta_j + \bar{\beta}_{n-j} q$, $y_{n-j}= \be_{n-j} + \bar\be_j q$ and $|\beta_j| + |\beta_{n-j}| \leq {n \choose j}$ for all $j =  1, \dots, \left[\frac{n}{2}\right] $. Hence $(10)$ holds.\\
	
	\noindent $(10)\imp (2)$:
	Suppose $(10)$ holds. Then $|q| \leq 1$, and there is $(\be_1,\dots,\be_{n-1}) \in \C^n$ such that for any $j \in \left\{ 1, \dots, \left[\frac{n}{2}\right] \right\}$
	\[
	|\be_j|+ |\be_{n-j}| \leq \nj, \q y_j = \be_j + \bar \be_{n-j} q \q \text{and}\q y_{n-j} = \be_{n-j} + \bar \be_j q.
	\]
	Hence
	\[
	|y_{n-j}| = |\beta_{n-j} + \bar\beta_j q| \leq |\beta_{n-j}| + |\beta_j| \leq {n \choose j}.
	\]
	Let $j \in \left\{ 1, \dots, \left[\frac{n}{2}\right] \right\}$ be arbitrary. Then
	\begingroup
	\allowdisplaybreaks
	\begin{align*}
	|y_j|^2 -|y_{n-j}|^2
	= & \; |\beta_j|^2 +|\beta_{n-j}|^2|q|^2 + 2 \operatorname{Re}(\beta_j \beta_{n-j} \bar q) -|\beta_{n-j}|^2 - |\beta_j|^2 |q|^2 - 2 \operatorname{Re}(\beta_j \beta_{n-j} \bar q)\\
	=& \; (|\beta_j|^2 -|\beta_{n-j}|^2)(1 - |q|^2) \\
	\leq & \; {n \choose j} (|\beta_j| -|\beta_{n-j}|)(1 - |q|^2),
	\end{align*}
	\endgroup
	and $ \quad
	y_{n-j} - \bar y_j q = \beta_{n-j} + \bar\beta_j q - (\bar\beta_j + \beta_{n-j} \bar q)q = \beta_{n-j}(1 - |q|^2).
	$
	Hence
	\[
	|y_j|^2 -|y_{n-j}|^2 - 2 {n \choose j} |y_{n-j} - \bar y_j q|
	\leq {n \choose j} (|\beta_j| + |\beta_{n-j}|)(1 - |q|^2)\leq {n \choose j}^ 2 (1 - |q|^2),
	\]
	that is,
	\[
	|y_j|^2 -|y_{n-j}|^2 + \nj^2|q|^2 - 2 {n \choose j} |y_{n-j} - \bar y_j q| \leq \nj^2.
	\]
	Thus, condition $(5)$ holds. 
	Since $j$ was chosen arbitrarily and condition $(2)$ is equivalent to $(5)$,
	thus condition $(2)$ of this theorem holds. Consequently, the proof is complete.
\end{proof}

\begin{rem}\label{rem:21}
The set $\widetilde{\Gamma}_n$ can be described as follows :

\begin{align*}
\widetilde{\Gamma}_n := \Bigg\{ (y_1,\dots,y_{n-1}, q)\in \C^n :\; q \in \overline\D, \;  y_j = \be_j + \bar \be_{n-j} q, \; \beta_j \in \C &\text{ and }\\
|\beta_j|+ |\beta_{n-j}| \leq {n \choose j} &\text{ for } j=1,\dots, n-1 \Bigg\}.
\end{align*}

\end{rem}

\begin{rem} \label{rem:22}
	Let $y=(y_1,\dots,y_{n-1},q) \in \Cn$. For any $j \in \left\{1,\dots, \left[\frac{n}{2}\right] \right\}$, if $\tilde y^j = (\tilde y_1,\dots,\tilde y_{n-1},\tilde q)$ where
	$\tilde y_k =y_k$ for $k\neq j,n-j;\q \tilde y_j = y_{n-j}, \; \; \tilde y_{n-j} = y_j$ and $\tilde q = q$,
	then $y \in \Gn($or $\Gamn)$ if and only if $\tilde y^j \in \Gn($or $\Gamn)$.	
	
\end{rem}

\begin{rem} \label{rem:23}
It was proved in \cite{ay-jot} (see Theorem 1.1 in \cite{ay-jot}) that the closed symmetrized bidisc $\Gamma_2$ has the following representation:
\[
\Gamma_2 =\{(s,p)\in\mathbb C^2\,:\, s=\beta + \bar{\beta}p, \text{ for any } p, \beta \in \overline{\D}  \}
\]
Thus it is evident from Theorem \ref{char F 3} (and also from Remark \ref{rem:21}) that $\widetilde{\Gamma_2}=\Gamma_2$. We shall see in Section 4 that $\gamn \subsetneq \Gamn$ for all $n\geq 3$.

\end{rem}

\vspace{0.3cm}

\section{Characterizations of the points in $\gn$ and $\gamn$}

\vspace{0.3cm}

\noindent In this section, we find a variety of characterizations for the symmetrized polydisc $\gn$ via the characterizations of $\Gn$ as in Theorem \ref{char G 3}. In \cite{costara1}, Costara found the following interesting characterization of a point in $\gn$ and $\gamn$.

\begin{thm}[\cite{costara1}, Theorems 3.6 and 3.7]\label{gn}
	For a point $(s_1,\dots,s_{n-1},p) \in \C^n$, the following are equivalent:
	\begin{enumerate}
		\item The point $(s_1,\dots,s_{n-1},p) \in \gn \;  ($respectively $\in \Gamma_n)$. 
		\item $|p|<1 \;($respectively $\leq 1)$ and there exists $\lf S_1,\dots, S_{n-1} \rf \in \mathbb G_{n-1} \;($respectively $\in \Gamma_{n-1})$ such that 
		\[ 
		s_j = S_j + \bar S_{n-j} p \q \text{for }\; j=1,\dots, n-1 .
		\]
	\end{enumerate}
\end{thm}
We have mentioned before (see Remark \ref{rem:23}) that $\widetilde{\Gamma}_2=\Gamma_2$. Indeed, it is true that $\Gn=\gn$ for $n=2$ but $\gn \subsetneqq \Gn$ for $n\geq 3$. The first part of this claim follows straight from the above result of Costara (Theorem \ref{gn}). We show below that $\gn \subsetneqq \Gn$ for $n\geq 3$. By Theorem \ref{gn}, $s_i=S_i+\bar{S}_{n-i}p$ for every point $(s_1,\dots ,s_{n-1},p)\in \gn$, where $(S_1,\dots, S_{n-1})\in \mathbb G_{n-1}$. Needless to mention that $|S_i|+|S_{n-i}|< {n \choose i}$ and this implies that $(s_1,\dots, s_{n-1},p)\in \Gn$. Now consider the point $\lf \dfrac{5}{2}, \dfrac{5}{4}, \dfrac{1}{2} \rf \in \C^3$. We can write $\lf \dfrac{5}{2}, \dfrac{5}{4}, \dfrac{1}{2} \rf = \lf \be_1 + \overline\be_2 p, \be_2 + \overline\be_1 p, p \rf$, where $\be_1 = \dfrac{5}{2}, \be_2 = 0, p=\dfrac{1}{2} $. Also $\beta_1 , \beta_2$ are unique because if we write $(y_1,y_2,p)=\lf \dfrac{5}{2}, \dfrac{5}{4}, \dfrac{1}{2} \rf$, then
\[
\beta_1=\frac{y_1-\bar{y}_2p}{1-|p|^2} \; \text{ and } \; \beta_2=\frac{y_2-\bar{y}_1p}{1-|p|^2}.
\]
Since $|\beta_1|+|\beta_2|<3$, $\lf \dfrac{5}{2}, \dfrac{5}{4}, \dfrac{1}{2} \rf \in \widetilde{\mathbb G_3}$. Clearly $\lf \be_1, \be_2 \rf \notin \mathbb G_2$, as $|\be_1| > 2$. By Theorem \ref{gn}, $\lf \dfrac{5}{2}, \dfrac{5}{4}, \dfrac{1}{2} \rf \notin \mathbb G_3$. So, $\mathbb G_3 \subsetneqq \widetilde{\mathbb G_3}$. Similarly for any $n>3$, we may choose $\lf \beta_1,\dots ,\beta_{n-1} \rf=\lf \dfrac{2n-1}{2},0,\dots, 0 \rf$ and $p=\dfrac{1}{2}$ to obtain a point $(y_1,\dots , y_{n-1},p)$, where $y_i=\beta_i+ \bar{\beta}_{n-i}p,$ for $i=1,\dots, n-1$, which is in $\widetilde{\mathbb G_n}$ but not in $\mathbb G_n$.

\subsection{Characterizations of a point in $\gn$}
\begin{thm}\label{gn - Gn}
    Let $(s_1,\dots,s_{n-1},p) \in \C^n$ with $|p|\neq 1$. Consider the point
    $$Q=\lf \dfrac{s_1 - \bar s_{n-1} p}{1-|p|^2}, \dfrac{s_2 - \bar s_{n-2} p}{1 - |p|^2}, \dots, \dfrac{s_{n-1} - \bar s_1 p}{1 - |p|^2} \rf \in \C^{n-1}.$$ Then $(s_1,\dots,s_{n-1},p) \in \gn$ if and only if $(s_1,\dots,s_{n-1},p) \in \Gn$ and $Q \in \mathbb G_{n-1}$.
\end{thm}
\begin{proof}
    Suppose $(s_1,\dots,s_{n-1},p) \in \gn$. Then $|p|<1$ and $(s_1,\dots , s_{n-1},p)\in \Gn$ as $\gn \subseteq \Gn$. By Theorem \ref{gn}, there exists $(\be_1,\dots,\be_{n-1}) \in \mathbb G_{n-1}$ so that
    \[
    s_j = \be_j + \bar \be_{n-j} p \q \text{for each } j = 1, \dots, n-1.
    \]
    Then for each $j = 1, \dots, n-1$, we have
    $
    \be_j = \dfrac{s_j - \bar s_{n-j} p }{1-|p|^2}.
    $
    Also these $\beta_j's$ are unique. So,
 by Theorem \ref{gn},
    \[
    Q=\lf \dfrac{s_1 - \bar s_{n-1} p}{1-|p|^2}, \dfrac{s_2 - \bar s_{n-2} p}{1 - |p|^2}, \dots, \dfrac{s_{n-1} - \bar s_1 p}{1 - |p|^2} \rf = (\be_1,\dots,\be_{n-1}) \in \mathbb G_{n-1}.
    \]
   
    Conversely, suppose $(s_1,\dots,s_{n-1},p) \in \Gn$ and $Q \in \mathbb G_{n-1}$. Then $|p|<1$.
    For each $j = 1, \dots, n-1$, consider
    \[
    \be_j = \dfrac{s_j - \bar s_{n-j} p }{1-|p|^2}.
    \]
    Then, we have $ \be_j + \bar \be_{n-j} p = s_j$ for each $j = 1, \dots, n-1$. Since $(\be_1,\dots,\be_{n-1})=Q \in \mathbb G_{n-1}$, it follows from Theorem \ref{gn} that $(s_1,\dots,s_{n-1},p) \in \gn$.
\end{proof}

We finally arrive at the main result of this paper, which characterizes a point in $\gn$ in several different ways.

\begin{thm}\label{char gn}
    Let $x=(s_1,\dots,s_{n-1},p) \in \C^n$ and let
\[
Q=\lf \dfrac{s_1 - \bar s_{n-1} p}{1-|p|^2}, \dfrac{s_2 - \bar s_{n-2} p}{1 - |p|^2}, \dots, \dfrac{s_{n-1} - \bar s_1 p}{1 - |p|^2} \rf\,, \text{ when } |p| \neq 1.
\]    
     Then the following are equivalent:
    \begin{enumerate}
        \item[$(1)$] $x \in \gn$;\\
        \item[$(2)$] $Q\in \mathbb G_{n-1}$ and
        ${n \choose j} - s_j z - s_{n-j}w + { n \choose j} pzw \neq 0$, for all $z,w \in \overline\D$ and for all $j = 1, \dots, \left[\frac{n}{2}\right]$ ;\\
        \item[$(3)$] $Q\in \mathbb G_{n-1}$ and for all $j = 1, \dots, \left[\frac{n}{2}\right]$ either
        \[
        \n \Phi_j(.,x)\n_{H^{\infty}} < 1 \; \text{and if } s_js_{n-j}= {n \choose j}^2 p \; \text{ then,}
        \;\text{ in addition, }\; |s_{n-j}|< {n \choose j}
        \]
        or
        \[
        \n \Phi_{n-j}(.,x)\n_{H^{\infty}} < 1 \; \text{and if } s_js_{n-j}= {n \choose j}^2 p \; \text{ then,}
        \;\text{ in addition, }\; |s_j|< {n \choose j} ;
        \]
        \item[$(4)$] $Q\in \mathbb G_{n-1}$ and for all $j = 1, \dots, \left[\frac{n}{2}\right]$ either
        \[
        {n \choose j}\left|s_j - \bar s_{n-j} p\right| + \left|s_j s_{n-j} - {n \choose j}^2 p \right| < {n \choose j}^2 -|s_{n-j}|^2
        \]
        or
        \[
        {n \choose j}\left|s_{n-j} - \bar s_j p\right| + \left|s_j s_{n-j} - {n \choose j}^2 p \right| < {n \choose j}^2 -|s_j|^2 ;
        \]
        \item[$(5)$] $Q\in \mathbb G_{n-1}$ and for all $j = 1, \dots, \left[\frac{n}{2}\right]$ either
        \[
        |s_j|^2 - |s_{n-j}|^2 + {n \choose j}^2|p|^2 + 2{n \choose j}\left|s_{n-j} - \bar s_j p\right| < {n \choose j}^2 \text{ and } \; |s_{n-j}|< {n \choose j}
        \]
        or
        \[
        |s_{n-j}|^2 - |s_j|^2 + {n \choose j}^2|p|^2 + 2{n \choose j}\left|s_j - \bar s_{n-j} p\right| < {n \choose j}^2 \text{ and } \; |s_j|< {n \choose j} ;
        \]
        \item[$(6)$] $Q\in \mathbb G_{n-1}$, $|p|<1$ and
        $|s_j|^2 + |s_{n-j}|^2 - {n \choose j}^2|p|^2 + 2\left| s_js_{n-j} - {n \choose j}^2 p \right| < {n \choose j}^2$ for all $j = 1, \dots, \left[\frac{n}{2}\right]$;\\
        \item[$(7)$] $Q\in \mathbb G_{n-1}$ and $\left| s_{n-j} - \bar s_j p \right| + \left| s_j -\bar s_{n-j} p \right| < {n \choose j} (1 - |p|^2)$ for all $j = 1, \dots, \left[\frac{n}{2}\right]$;\\
        \item[$(8)$] $Q\in \mathbb G_{n-1}$ and there exist $\left[\frac{n}{2}\right]$ number of $2 \times 2$ matrices $B_1,\dots, B_{\left[\frac{n}{2}\right]}$ such that $\n B_j \n < 1$, $s_j = {n \choose j}[B_j]_{11} $, $s_{n-j} = {n \choose j}[B_j]_{22}$ for all $j = 1, \dots, \left[\frac{n}{2}\right]$ and
        \[\det B_1= \det B_2 = \det B_{\left[\frac{n}{2}\right]}= p \; ;\]
        \item[$(9)$] $Q\in \mathbb G_{n-1}$ and there exist $\left[\frac{n}{2}\right]$ number of $2 \times 2$ symmetric matrices $B_1,\dots, B_{\left[\frac{n}{2}\right]}$ such that $\n B_j \n < 1$, $s_j = {n \choose j}[B_j]_{11} $, $s_{n-j} = {n \choose j}[B_j]_{22}$ for all $j = 1, \dots, \left[\frac{n}{2}\right]$ and \[\det B_1= \det B_2 = \det B_{\left[\frac{n}{2}\right]}= p \; .\]
    \end{enumerate}
\end{thm}
\begin{proof}
    The proof follows from Theorem $\ref{char G 3}$ and Theorem $\ref{gn - Gn}$.
\end{proof}

\subsection{Characterizations of a point in $\gamn$}
Recall from the literature (see \cite{edi-zwo}) that the distinguished boundary of $\Gamma_n$ is the smallest closed subset $b\Gamma_n$ of $ \subset \gamn$ such that every complex-valued function on $\Gamma_n$, that is analytic in the interior $\gn$ and continuous on $\gamn$, attains its maximum modulus on $b\gamn$. Evidently, $b\gamn$ is a subset of the topological boundary $\partial \gamn $ of $\gamn$. In \cite{BSR}, the points in the distinguished boundary $b\Gamma_n$ of $\Gamma_n$ are characterized in the following way:
\begin{thm}[\cite{BSR}, Theorem 2.4]\label{thm:DB}
    For $(s_1,\dots,s_{n-1},p)\in\mathbb C^n$ the following are equivalent:
    \begin{enumerate}
        \item $(s_1,\dots,s_{n-1},p)\in b\Gamma_n$\,;
        \item $(s_1,\dots,s_{n-1},p)\in\Gamma_n$
        and $|p|=1$\,;
        \item $|p|=1$, $s_j=\bar s_{n-j}p$ and
        $\left(\dfrac{n-1}{n}s_1,\dfrac{n-2}{n}s_2,\dots, \dfrac{1}{n}s_{n-1}\right)\in\Gamma_{n-1}$\,; \item $|p|=1$
        and there exists $(\be_1,\dots,\be_{n-1})\in b\Gamma_{n-1}$ such that\\
        $ s_j=\be_j+\bar\be_{n-j}p $ , for $j=1,\dots,n-1$.
    \end{enumerate}
\end{thm}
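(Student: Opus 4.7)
The plan is to prove the implications $(1)\Leftrightarrow(2)$, $(2)\Rightarrow(3)$, $(3)\Rightarrow(2)$, $(2)\Rightarrow(4)$ and $(4)\Rightarrow(2)$. The equivalence $(1)\Leftrightarrow(2)$ is a standard Shilov boundary argument. The implications from $(2)$ are obtained by choosing a preimage of $(s_1,\dots,s_{n-1},p)$ in the torus $\T^n$ under the symmetrization $\pi_n$ and carrying out elementary symmetric function computations. The return $(4)\Rightarrow(2)$ is an immediate application of Costara's Theorem \ref{gn}. The return $(3)\Rightarrow(2)$ is the only non-trivial step and rests on a classical theorem about self-inversive polynomials.

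For $(1)\Leftrightarrow(2)$, I would first show $b\gamn\subseteq\{(s,p)\in\gamn:|p|=1\}$ by noting that the coordinate function $(s_1,\dots,s_{n-1},p)\mapsto p$ is continuous on $\gamn$, holomorphic on $\gn$ and satisfies $|p|\leq 1$ there, so it attains its maximum modulus $1$ on $b\gamn$. For the converse inclusion I would verify that $\{(s,p)\in\gamn:|p|=1\}=\pi_n(\T^n)$ (the equality $|\prod z_i|=1$ together with $|z_i|\leq 1$ forces each $|z_i|=1$) and that $\pi_n(\T^n)\subseteq b\gamn$. The latter is standard: functions in the algebra of continuous functions on $\gamn$ holomorphic on $\gn$ correspond bijectively to symmetric functions in the polydisc algebra on $\overline\D^n$ via pullback under $\pi_n$, and the Shilov boundary of the polydisc algebra is $\T^n$.

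For $(2)\Rightarrow(3)$, write $(s_1,\dots,s_{n-1},p)=\pi_n(z_1,\dots,z_n)$ with each $|z_i|=1$. Using $\bar z_i=1/z_i$ and the identity $e_{n-j}(1/z_1,\dots,1/z_n)=e_j(z)/e_n(z)$, a direct computation gives $\bar s_{n-j}p=s_j$. Furthermore, the monic polynomial $P(x)=\prod_{i=1}^n(x-z_i)=\sum_{j=0}^n(-1)^j s_j x^{n-j}$ (with $s_0=1$, $s_n=p$) has all roots on $\T$, so by the Gauss--Lucas theorem its derivative
\[
\frac{P'(x)}{n}=x^{n-1}-\frac{n-1}{n}s_1 x^{n-2}+\cdots+\frac{(-1)^{n-1}}{n}s_{n-1}
\]
has all roots in $\overline\D$, which says exactly $\bigl(\tfrac{n-1}{n}s_1,\dots,\tfrac{1}{n}s_{n-1}\bigr)\in\Gamma_{n-1}$. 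For $(2)\Rightarrow(4)$, with the same preimage $z$ I would set $\beta_j=e_j(z_1,\dots,z_{n-1})$; then $(\beta_1,\dots,\beta_{n-1})=\pi_{n-1}(z_1,\dots,z_{n-1})\in\pi_{n-1}(\T^{n-1})\subseteq b\Gamma_{n-1}$ by the direction $(2)\Rightarrow(1)$ already proved for $n-1$. The recursion $e_j(z_1,\dots,z_n)=e_j(z_1,\dots,z_{n-1})+z_n e_{j-1}(z_1,\dots,z_{n-1})$, together with $p=z_n\beta_{n-1}$ and the identity $\overline{e_{n-j}(z_1,\dots,z_{n-1})}=e_{j-1}(z_1,\dots,z_{n-1})/\beta_{n-1}$ (valid because each $|z_i|=1$), rearranges to $s_j=\beta_j+\bar\beta_{n-j}p$. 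Finally, $(4)\Rightarrow(2)$ is immediate from Theorem \ref{gn}: $|p|=1\in\overline\D$ and $(\beta_1,\dots,\beta_{n-1})\in b\Gamma_{n-1}\subseteq\Gamma_{n-1}$ are exactly the hypotheses of Costara's characterization of $\gamn$.

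The main obstacle is $(3)\Rightarrow(2)$. The conditions $|p|=1$ and $s_j=\bar s_{n-j}p$ are equivalent to saying that the polynomial $P(x)=\sum_{j=0}^n(-1)^j s_j x^{n-j}$ is \emph{self-inversive}, i.e.\ $x^n\overline{P(1/\bar x)}=pP(x)$; the hypothesis $\bigl(\tfrac{n-1}{n}s_1,\dots,\tfrac{1}{n}s_{n-1}\bigr)\in\Gamma_{n-1}$ says precisely that $P'$ has all its zeros in $\overline\D$. I then appeal to a classical theorem of Cohn which asserts that a self-inversive polynomial whose derivative has all zeros in $\overline\D$ must in fact have all its zeros on $\T$. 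Writing these roots as $z_1,\dots,z_n\in\T$ yields $(s_1,\dots,s_{n-1},p)=\pi_n(z)\in\gamn$ with $|p|=1$, which is $(2)$. If one prefers to avoid invoking Cohn's theorem, the same conclusion can be obtained directly from Gauss--Lucas: a hypothetical root $\alpha$ of $P$ with $|\alpha|>1$ would be paired by self-inversivity with $1/\bar\alpha\in\D$, and analysing $P'/P$ at $1/\bar\alpha$ forces a root of $P'$ outside $\overline\D$, contradicting the hypothesis.
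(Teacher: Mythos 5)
The paper does not actually prove this statement: it is imported verbatim from \cite{BSR} (Theorem 2.4), so there is no internal argument to measure yours against. Judged on its own terms, most of your proposal is sound. The computations behind $(2)\Rightarrow(3)$ (the identity $\bar s_{n-j}p=s_j$ on $\pi_n(\mathbb T^n)$ and the Gauss--Lucas step identifying $\bigl(\tfrac{n-1}{n}s_1,\dots,\tfrac{1}{n}s_{n-1}\bigr)$ with $\pi_{n-1}$ of the critical points), the recursion giving $(2)\Rightarrow(4)$, the reduction of $(4)\Rightarrow(2)$ to Theorem \ref{gn}, and the appeal to Cohn's theorem on self-inversive polynomials for $(3)\Rightarrow(2)$ are all correct; Cohn's theorem is exactly the classical fact needed there.

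The genuine gap is in $(1)\Leftrightarrow(2)$, where your two sub-arguments are attached to the wrong inclusions. That the coordinate function $p$ attains its maximum modulus on $b\Gamma_n$ only yields \emph{one} point of $b\Gamma_n$ with $|p|=1$; it does not give $b\Gamma_n\subseteq\{|p|=1\}$ (compare $z_1+z_2$ on the bidisc: it attains its maximum modulus on $\mathbb T^2$, yet only on a thin subset of it). The correct proof of that inclusion is precisely the pullback argument you reserve for the converse: every $f$ continuous on $\Gamma_n$ and holomorphic on $\mathbb G_n$ lifts to a symmetric element of the polydisc algebra, hence attains its maximum modulus on $\mathbb T^n$, hence on $\pi_n(\mathbb T^n)=\Gamma_n\cap\{|p|=1\}$; this set is therefore a closed boundary and contains the Shilov boundary $b\Gamma_n$. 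For the reverse inclusion $\pi_n(\mathbb T^n)\subseteq b\Gamma_n$, knowing that the Shilov boundary of the full polydisc algebra is $\mathbb T^n$ is not sufficient, because the Shilov boundary can shrink when one passes to the symmetric subalgebra; one must exhibit, for each $z^0\in\mathbb T^n$, a function in the algebra peaking at $\pi_n(z^0)$ (this peak-point construction is the substantive content of the proof in \cite{BSR}). Note also that this missing inclusion, in dimension $n-1$, is exactly what your $(2)\Rightarrow(4)$ invokes to place $(\beta_1,\dots,\beta_{n-1})$ in $b\Gamma_{n-1}$, so the gap propagates there as well. Everything else survives once this equivalence is repaired.
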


Theorem \ref{thm:DB} provides several independent descriptions of the points in $b\Gamma_n$. Thus, in order to characterize the points in $\Gamma_n$ it suffices if we find characterizations for the points in $\gamn \setminus b\gamn$.

\begin{thm}\label{gamn - Gamn}
    Let $(s_1,\dots,s_{n-1},p) \in \C^n$. Then the following are equivalent:
    \begin{enumerate}
        \item $(s_1,\dots,s_{n-1},p) \in \gamn \setminus b\gamn$;
        \item $(s_1,\dots,s_{n-1},p) \in \Gamma_n$ and
        \[
        \lf \dfrac{s_1 - \bar s_{n-1} p}{1-|p|^2}, \dfrac{s_2 - \bar s_{n-2} p}{1 - |p|^2}, \dots, \dfrac{s_{n-1} - \bar s_1 p}{1 - |p|^2} \rf \in \Gamma_{n-1} \setminus b\Gamma_{n-1}.
        \]
    \end{enumerate}
\end{thm}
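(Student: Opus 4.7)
The plan is to mirror the argument used for Theorem \ref{gn - Gn} (the open-case analog), exploiting the description of the distinguished boundary $b\Gamma_n$ from Theorem \ref{thm:DB}: a point of $\Gamma_n$ lies in $b\Gamma_n$ if and only if $|p|=1$.

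For $(1)\Rightarrow(2)$, suppose $x=(s_1,\dots,s_{n-1},p)\in\Gamma_n\setminus b\Gamma_n$. Theorem \ref{thm:DB} then forces $|p|<1$, so the denominators $1-|p|^2$ are nonzero and the tuple in (2) is well defined. Costara's Theorem \ref{gn} produces $S=(S_1,\dots,S_{n-1})\in\Gamma_{n-1}$ with $s_j=S_j+\bar S_{n-j}p$ for $j=1,\dots,n-1$. Solving this linear system (it decouples in the index pairs $\{j,n-j\}$) forces
\[
S_j=\dfrac{s_j-\bar s_{n-j}p}{1-|p|^2},
\]
so the displayed tuple equals $S$ and lies in $\Gamma_{n-1}$; of course $x\in\Gamma_n$ by hypothesis. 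It remains to verify $S\notin b\Gamma_{n-1}$. Here I would argue by contradiction: if $S\in b\Gamma_{n-1}$, Theorem \ref{thm:DB}(3) in dimension $n-1$ yields $|S_{n-1}|=1$ together with $S_j=\bar S_{n-1-j}S_{n-1}$ for $j=1,\dots,n-2$; substituting these relations into $s_j=S_j+\bar S_{n-j}p$ and unwinding should force $|p|=1$, contradicting the first step.

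For $(2)\Rightarrow(1)$ the argument is short and essentially formal: the very appearance of $1-|p|^2$ in the denominators forces $|p|\neq 1$, while $x\in\Gamma_n$ imposes $|p|\leq 1$; hence $|p|<1$. Theorem \ref{thm:DB} then gives $x\notin b\Gamma_n$, so $x\in\Gamma_n\setminus b\Gamma_n$. Note that in this direction the strengthening $S\notin b\Gamma_{n-1}$ is not really needed; only $S\in\Gamma_{n-1}$ is invoked, through Costara's representation.

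The main obstacle will be the boundary-exclusion step in $(1)\Rightarrow(2)$, i.e.\ producing $|p|=1$ from the combined data $s_j=S_j+\bar S_{n-j}p$ and the distinguished-boundary identities $|S_{n-1}|=1$, $S_j=\bar S_{n-1-j}S_{n-1}$. This will require care with degenerate sub-cases (vanishing of some $S_j$, or coincidence $j=n-j$). A clean route may be to lift through the symmetrisation map $\pi_n$: relations forced by $S\in b\Gamma_{n-1}$ should translate into $|z_k|=1$ for appropriate preimage roots $z_k$, which then gives $|p|=|z_1\cdots z_n|=1$ and closes the argument.
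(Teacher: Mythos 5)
Your skeleton coincides with the paper's: both directions rest on Theorem \ref{thm:DB} (membership of a point of $\Gamma_n$ in $b\Gamma_n$ is equivalent to $|p|=1$), Costara's representation, and the explicit inversion $S_j=(s_j-\bar s_{n-j}p)/(1-|p|^2)$; your $(2)\Rightarrow(1)$ is essentially the paper's argument (the paper additionally runs a separate case $|p|=1$). The problem is exactly the step you single out as the main obstacle: excluding $Q\in b\Gamma_{n-1}$ in the direction $(1)\Rightarrow(2)$. The contradiction you aim for --- that $Q\in b\Gamma_{n-1}$ together with $s_j=S_j+\bar S_{n-j}p$ forces $|p|=1$ --- is not available. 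Take $n=3$ and $x=(2,1,0)=\pi_3(1,1,0)$: then $x\in\Gamma_3$ with $p=0$, so $x\in\Gamma_3\setminus b\Gamma_3$, yet $Q=(2,1)=\pi_2(1,1)$ has unimodular last coordinate and hence lies in $b\Gamma_2$ by Theorem \ref{thm:DB}. (For $n=2$ the point $(1,0)$ does the same job, with $Q=1\in b\Gamma_1$.) Your suggested lift through the symmetrization map fails for the same reason: unimodularity of the $n-1$ roots of $Q$ says nothing about the remaining root of $x$, which in the example is $0$, so one cannot conclude $|p|=|z_1\cdots z_n|=1$.

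So this step would not merely require ``care with degenerate sub-cases''; it cannot be closed, and the examples above show that $(1)$ does not imply $(2)$ as stated. It is worth noting how the paper's own proof handles the same step: it asserts in one line that since $x\notin b\Gamma_n$, Theorem \ref{thm:DB} gives $Q\notin b\Gamma_{n-1}$. But part $(4)$ of Theorem \ref{thm:DB} converts $Q\in b\Gamma_{n-1}$ into $x\in b\Gamma_n$ only when $|p|=1$ is also in hand, which is precisely what has been ruled out at that stage. In other words, you have correctly located the weak point of the argument; the difficulty is not in your execution but in the claim itself, and only the weaker conclusion $Q\in\Gamma_{n-1}$ (the closed analogue of Theorem \ref{gn - Gn}) actually follows from $(1)$.
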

\begin{proof}
    Suppose $(s_1,\dots,s_{n-1},p) \in \gamn \setminus b\gamn$. Then $(s_1,\dots,s_{n-1},p) \in \Gamn$ and $|p|\leq1$. By Theorem \ref{thm:DB}, we have $|p|<1$. By Theorem \ref{gn}, there exists $(\be_1,\dots,\be_{n-1}) \in \Gamma_{n-1}$ such that
    \[
    s_j = \be_j + \bar \be_{n-j} p \q \text{for each } j = 1, \dots, n-1.
    \]
    Since $|p|<1$, we have that
    \[
    \be_j = \dfrac{s_j - \bar s_{n-j} p }{1-|p|^2} \q \text{for each } j = 1, \dots, n-1.
    \]
    Thus, $\lf \dfrac{s_1 - \bar s_{n-1} p}{1-|p|^2}, \dfrac{s_2 - \bar s_{n-2} p}{1 - |p|^2}, \dots, \dfrac{s_{n-1} - \bar s_1 p}{1 - |p|^2} \rf \in \Gamma_{n-1}$.
    Since $(s_1,\dots,s_{n-1},p) \notin b\gamn$, by Theorem \ref{thm:DB},
    $\lf \dfrac{s_1 - \bar s_{n-1} p}{1-|p|^2}, \dfrac{s_2 - \bar s_{n-2} p}{1 - |p|^2}, \dots, \dfrac{s_{n-1} - \bar s_1 p}{1 - |p|^2} \rf \in \Gamma_{n-1} \setminus b\Gamma_{n-1}$. Thus $(1) \imp (2)$.

    Conversely, suppose (2) holds. Then $|p|\leq 1$. First suppose $|p|< 1$ and consider
    $
    \be_j = \dfrac{s_j - \bar s_{n-j} p }{1-|p|^2}.
    $
    Then, we have $ \be_j + \bar \be_{n-j} p = s_j$ for each $j = 1, \dots, n-1$. By hypothesis $(\be_1,\dots,\be_{n-1}) \in \Gamma_{n-1}$. Hence by Theorem 3.7 in \cite{costara1}, we have $(s_1,\dots,s_{n-1},p) \in \gamn$. Now suppose $|p|=1$. Since
    $\lf \dfrac{s_1 - \bar s_{n-1} p}{1-|p|^2}, \dfrac{s_2 - \bar s_{n-2} p}{1 - |p|^2}, \dots, \dfrac{s_{n-1} - \bar s_1 p}{1 - |p|^2} \rf \notin  b\Gamma_{n-1}$, by Theorem \ref{thm:DB} we can conclude that $(s_1,\dots,s_{n-1},p) \notin b\gamn$. Hence $(1)$ holds.

\end{proof}

The following theorem, which is another main result of this article, is a consequence of Theorems $\ref{char F 3}$ and $\ref{gamn - Gamn}$ and it provides several independent characterizations of a point in $\gamn\setminus b\gamn$.
\begin{thm}\label{char gamn}
    Let $x=(s_1,\dots,s_{n-1},p) \in \C^n$ and let
\[
Q=\lf \dfrac{s_1 - \bar s_{n-1} p}{1-|p|^2}, \dfrac{s_2 - \bar s_{n-2} p}{1 - |p|^2}, \dots, \dfrac{s_{n-1} - \bar s_1 p}{1 - |p|^2} \rf\,, \text{ when } |p| \neq 1.
\]  
  Then the following are equivalent:
    \begin{enumerate}
        \item[$(1)$] $x \in \gamn \setminus b\gamn$;\\
        \item[$(2)$] $Q\in \mathbb G_{n-1}$ and
        ${n \choose j} - s_j z - s_{n-j}w + { n \choose j} pzw \neq 0$, for all $z,w \in \D$ and $ 1\leq j \leq \left[\frac{n}{2}\right]$ ;\\
        \item[$(3)$] $Q\in \mathbb G_{n-1}$ and for all $j = 1, \dots, \left[\frac{n}{2}\right]$ either
        \[
        \n \Phi_j(.,x)\n_{H^{\infty}} \leq 1 \; \text{and if } s_js_{n-j}= {n \choose j}^2 p \; \text{ then,}
        \;\text{ in addition, }\; |s_{n-j}|\leq {n \choose j}
        \]
        or
        \[
        \n \Phi_{n-j}(.,x)\n_{H^{\infty}} \leq 1 \; \text{and if } s_js_{n-j}= {n \choose j}^2 p \; \text{ then,}
        \;\text{ in addition, }\; |s_j|\leq {n \choose j} ;
        \]
        \item[$(4)$] $Q\in \mathbb G_{n-1}$ and for all $j = 1, \dots, \left[\frac{n}{2}\right]$ either
        \begin{align*}
            &{n \choose j}\left|s_j - \bar s_{n-j} p\right| + \left|s_j s_{n-j} - {n \choose j}^2 p \right| \leq {n \choose j}^2 -|s_{n-j}|^2 \; \; \text{ and if } s_js_{n-j}= {n \choose j}^2 p \\ & \text{ then,}
            \;\text{ in addition, }\; |s_j|\leq {n \choose j}
        \end{align*}
        or
        \begin{align*}
            &{n \choose j}\left|s_{n-j} - \bar s_j p\right| + \left|s_j s_{n-j} - {n \choose j}^2 p \right| \leq {n \choose j}^2 -|s_j|^2 \; \; \text{ and if } s_js_{n-j}= {n \choose j}^2 p \\ & \text{ then,}
            \;\text{ in addition, }\; |s_{n-j}|\leq {n \choose j} ;
        \end{align*}
        \item[$(5)$] $Q\in \mathbb G_{n-1}$ and for all $j = 1, \dots, \left[\frac{n}{2}\right]$ either
        \[
        |s_j|^2 - |s_{n-j}|^2 + {n \choose j}^2|p|^2 + 2{n \choose j}\left|s_{n-j} - \bar s_j p\right| \leq {n \choose j}^2 \text{ and } \; |s_{n-j}|\leq {n \choose j}
        \]
        or
        \[
        |s_{n-j}|^2 - |s_j|^2 + {n \choose j}^2|p|^2 + 2{n \choose j}\left|s_j - \bar s_{n-j} p\right| \leq {n \choose j}^2 \text{ and } \; |s_j|\leq {n \choose j} ;
        \]
        \item[$(6)$] $Q\in \mathbb G_{n-1}$, $|p|< 1$ and
        $|s_j|^2 + |s_{n-j}|^2 - {n \choose j}^2|p|^2 + 2\left| s_js_{n-j} - {n \choose j}^2 p \right| \leq {n \choose j}^2$ for $j = 1, \dots, \left[\frac{n}{2}\right]$;\\
        \item[$(7)$] $Q\in \mathbb G_{n-1}$ and
        $\left| s_{n-j} - \bar s_j p \right| + \left| s_j -\bar s_{n-j} p \right| \leq {n \choose j} (1 - |p|^2)$ for $j = 1, \dots, \left[\frac{n}{2}\right]$ ;\\
        \item[$(8)$] $Q\in \mathbb G_{n-1}$ and there exist $\left[\frac{n}{2}\right]$ number of $2 \times 2$ matrices $B_1,\dots, B_{\left[\frac{n}{2}\right]}$ such that $\n B_j \n \leq 1$, $s_j = {n \choose j}[B_j]_{11} $, $s_{n-j} = {n \choose j}[B_j]_{22}$ for all $j = 1, \dots, \left[\frac{n}{2}\right]$ and
        $ \det B_1= \det B_2 = \det B_{\left[\frac{n}{2}\right]}= p$ ;\\
        \item[$(9)$] $Q\in \mathbb G_{n-1}$ and there exist $\left[\frac{n}{2}\right]$ number of $2 \times 2$ symmetric matrices $B_1,\dots, B_{\left[\frac{n}{2}\right]}$ such that $\n B_j \n \leq 1$, $s_j = {n \choose j}[B_j]_{11} $, $s_{n-j} = {n \choose j}[B_j]_{22}$ for all $j = 1, \dots, \left[\frac{n}{2}\right]$ and $ \det B_1= \det B_2 = \det B_{\left[\frac{n}{2}\right]}= p $.
    \end{enumerate}
\end{thm}

\begin{proof}
The proof is similar to the proof of Theorem \ref{char gn}.
\end{proof}

\section{Some more geometric properties of $\Gn$ and $\Gamn$}

\vspace{0.3cm}

\noindent We further study the geometry of the domain $\Gn$ and its closure $\Gamn$ and obtain a few important features. We prove that $\Gamn$ is not convex but is polynomially convex. Since the closed symmetrized bidisc $\Gamma_2$ is not convex (see Section 2 of \cite{ay-jfa} and \cite{edi-zwo}), it follows that $\widetilde{\Gamma_2}$ is not convex because $\Gamma_2=\widetilde{\Gamma_2}$ (see Remark \ref{rem:23}). Below we show that $\Gamn$ is not convex for any $n\geq 2$. Also, we show that both $\Gn$ and $\Gamn$ are starlike about the origin but not circled. It follows from here that $\Gn$ is simply connected. For the convenience of a reader, we begin with a few definitions from the literature.
\begin{defn}
	A compact set $K \subset \mathbb{C}^n$ is called polynomially convex if for any $y \in \mathbb{C}^n \setminus K $, there exists a polynomial in $n$ variables, say $P$, such that
	$$|P(y)| > \underset{z \in K}{\sup}|P(z)| = \lVert P \lVert_{\infty,K}. $$ 
\end{defn}

\begin{defn}
	A set $S \subset \mathbb{C}^n$ is said to be starlike if there exists a $z_0 \in S$ such that for all $z \in S$ the line segment joining $z$ and $z_0$, lies in $S$.
\end{defn}

\begin{defn}
	A set $S \subset \mathbb{C}^n$ is said to be circular if $z\in S$ implies that $(e^{\iota\theta}z_1,e^{\iota\theta}z_2,...,e^{\iota\theta}z_n) \in S$ for all $0\leq \theta < 2\pi$.	
\end{defn}
Consider the points $a=(n,0,\dots,0,ni,i) \in \C^n$ and $b=(-ni,0 \dots, 0,n,-i) \in \C^n$. Then $a , b \in \Gamn$, because, both the points $a$ and $b$ satisfy condition $(7)$ of Theorem $\ref{char F 3}$. The mid point of the line joining $a$ and $b$ is
    \[
    c = \frac{1}{2}(a+b) = \lf \dfrac{n-ni}{2},0,\dots,0, \dfrac{ni + n}{2},0 \rf \in \C^n.
    \]
    The point $c \notin \Gamn$. This is because the inequality in condition $(7)$ of Theorem $\ref{char F 3}$, corresponding to the point $c$ and $j=1$, does not hold. Indeed, after substituting the values in that particular inequality we get the following
    \[
    \left|\dfrac{ni + n}{2} \right| + \left|\dfrac{n-ni}{2} \right| = \sqrt{2}n \nleq {n \choose 1}.
    \]
    Hence, the line segment joining $a$ and $b$ does not entirely lie within $\Gamn$ and consequently $\Gamn$ is not convex.\\
    
We now show that $\Gamn$ is polynomially convex. The techniques that we use to prove this are similar to that in \cite{awy}, where the authors established that the tetrablock is polynomially convex.

\begin{thm}
    $\Gamn$ is polynomially convex.
\end{thm}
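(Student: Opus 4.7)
My plan is to follow the strategy used in \cite{awy} for proving polynomial convexity of the tetrablock. Suppose $y_0 \notin \Gamn$; the goal is to produce a polynomial $P$ with $|P(y_0)| > \|P\|_{\Gamn}$. By condition $(2)$ of Theorem~\ref{char F 3}, there exist $j_0 \in \{1,\dots,[n/2]\}$ and $(z_0, w_0) \in \D^2$ (the open bidisc) such that the affine polynomial
\[
P_0(y) := {n \choose j_0} - y_{j_0} z_0 - y_{n-j_0} w_0 + {n \choose j_0}\, q\, z_0 w_0
\]
vanishes at $y_0$. The idea is to analyze the compact image $K := P_0(\Gamn) \subset \C$ and then pull back a separating one-variable polynomial via Runge's theorem.

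By the matrix characterization (condition $(8)$ of Theorem~\ref{char F 3}), every point $y \in \Gamn$ corresponds, for the index $j_0$, to a $2 \times 2$ contraction $B$ with $[B]_{11} = y_{j_0}/{n \choose j_0}$, $[B]_{22} = y_{n-j_0}/{n \choose j_0}$, and $\det B = q$. A direct computation with $D_0 := \mathrm{diag}(z_0, w_0)$ yields the key identity
\[
P_0(y) \;=\; {n \choose j_0}\, \det(I - B D_0) \;=\; {n \choose j_0}\,(1 - \lambda_1)(1 - \lambda_2),
\]
where $\lambda_1, \lambda_2$ are the eigenvalues of $BD_0$. Since $\|B\| \leq 1$ and $\|D_0\| = \max(|z_0|,|w_0|) =: r < 1$, we have $|\lambda_i| \leq r$, so each factor $1 - \lambda_i$ lies in the closed disk of radius $r$ centered at $1$. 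Such a disk is contained in the sector $\{|\arg \zeta| \leq \arcsin r\}$ and the annulus $\{1 - r \leq |\zeta| \leq 1 + r\}$, which forces $K$ into the sector-annulus
\[
\Big\{\zeta \in \C : |\arg \zeta| \leq 2 \arcsin r \;\text{ and }\; {n \choose j_0}(1-r)^2 \leq |\zeta| \leq {n \choose j_0}(1+r)^2\Big\}.
\]
Because $r < 1$ forces $2 \arcsin r < \pi$, this sector-annulus does not wind around the origin.

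Consequently, $0$ lies in the unbounded component of $\C \setminus K$---one can escape from $0$ to $\infty$ through the complement of the sector-annulus, for instance along the negative real ray. Equivalently, $0$ is outside the polynomial convex hull of $K$ viewed as a compact subset of $\C$, so by Runge's theorem there is a polynomial $p \in \C[\zeta]$ with $|p(0)| > \|p\|_{K}$. The polynomial $P := p \circ P_0$ in the variables $(y_1,\dots,y_{n-1},q)$ then satisfies $|P(y_0)| = |p(0)| > \|p\|_{K} \geq \|P\|_{\Gamn}$, proving $\Gamn$ is polynomially convex. The crux of the argument is the sector-annulus localization of $K$; it rests essentially on the \emph{strict} inequality $r < 1$, itself a consequence of the open-bidisc condition $(z_0, w_0) \in \D^2$ furnished by characterization $(2)$ of $\Gamn$ (in contrast to the closed-bidisc condition that characterizes $\Gn$).
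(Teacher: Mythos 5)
Your proof is correct, but it takes a genuinely different route from the paper's. The paper argues by cases: if some coordinate of $y$ exceeds the obvious bound it uses a coordinate function, and otherwise it picks $j$ and $z\in\D$ with $|\Phi_j(z,y)|>1$ and approximates the rational function $x\mapsto\Phi_j(z,x)$ uniformly on a polydisc $\mathcal W\supseteq\Gamn\cup\{y\}$ by the partial sums $f_m$ of its geometric series, with the explicit tail bound $2|z|^{m+1}/(1-|z|)$, then rescales $f_k$ to get the separating polynomial. You instead take the affine functional $P_0(y)={n\choose j_0}-y_{j_0}z_0-y_{n-j_0}w_0+{n\choose j_0}qz_0w_0$ vanishing at $y_0$ (condition $(2)$ of Theorem \ref{char F 3}), and use the matrix characterization (condition $(8)$) together with the identity $P_0(y)={n\choose j_0}\det(I-BD_0)$ — which I checked: $\det(I-BD_0)=1-[B]_{11}z_0-[B]_{22}w_0+(\det B)z_0w_0$ — and the spectral bound $r(BD_0)\le\|B\|\,\|D_0\|\le r<1$ to trap $P_0(\Gamn)$ in a sector-annulus omitting $0$ from its polynomial hull; Runge's theorem in one variable then finishes. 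What your approach buys is the elimination of all case analysis (points with $|y_j|>{n\choose j}$ or $|q|>1$ need no separate treatment) and a sharper geometric statement, namely that the image of $\Gamn$ under each defining affine functional stays in a fixed sector-annulus away from the origin; what it costs is reliance on the matrix characterization $(8)$ and on Runge's theorem, whereas the paper's argument is entirely explicit and produces the separating polynomial by hand from characterization $(3)$ alone. Both proofs ultimately rest on the equivalences of Theorem \ref{char F 3}, and yours is complete as written.
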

\begin{proof}
    Consider a point $ y= (y_1,\dots,y_{n-1},q) \in \Cn \setminus \Gamn$. To show $\Gamn$ is polynomially convex, it is enough to find a polynomial $f$ in $n$ variables, such that $|f| \leq 1$ on $\Gamn$ and $|f(y)|>1 \, $. Let $|y_j| > \nj$ for some $j = 1, \dots, \left[\frac{n}{2}\right]$. Then $f\lf (x_1,\dots,x_n) \rf = \dfrac{x_j}{\nj}$  has the required property, since for any $x \in \Gamn$, $|x_j|\leq \nj$ for all $j = 1, \dots, \left[\frac{n}{2}\right]$. If $|q|>1$ then take $f\lf (x_1,\dots,x_n) \rf=x_n \, $. Thus assume $y$ to be such that $|y_j| \leq \nj$ for all $j =1, \dots, n-1$; and $|q|< 1$. Since $y \notin \Gamn$, by condition $(3)$ of Theorem $\ref{char G 3}$ there exist some $j \in \left\{1, \dots, \left[\frac{n}{2}\right] \right\}$ such that $\n \Phi_j(.,y)\n >1$. That is, there exist $z \in \D$ and some $j \in \left\{1, \dots, \left[\frac{n}{2}\right] \right\}$ such that $\left| \Phi_j(z,y) \right| >1$. Fix that $j$ and $z$. Note the following cases :
    \item[case 1]: If $y_jy_{n-j} = \nj^2q$, then $|\Phi_j(z,y)| = \dfrac{|y_j|}{\nj}$. Hence $|y_j| > \nj$ and then the function $f \lf (x_1,\dots,x_n) \rf =\dfrac{x_j}{\nj}$ is the required function.
    \item[case 2] :  If $y_jy_{n-j} \neq \nj^2q$, then $\Phi_j(z,y) = \dfrac{{n \choose j}qz-y_j}{y_{n-j}z-{n \choose j}}$. For $m \in \mathbb{N}$, define a polynomial $f_m$ in $n$ variables as follows
    \[
    f_m\lf x_1, \dots , x_n \rf = \lf\dfrac{x_j}{\nj} - x_nz \rf\lf 1+ \frac{x_{n-j}z}{\nj} + \lf \frac{x_{n-j}z}{\nj}\rf^2 + \cdots + \lf\frac{x_{n-j}z}{\nj}\rf^{m} \rf.
    \]
    Let \[\mathcal{W} = \ls (w_1,\dots,w_n) \in \Cn : |w_j| \leq \nj, j=1,\dots.n  \rs.\]
    Then $\Gamn \subset \mathcal{W}$ and also $y \in \mathcal W$. For $x \in \mathcal W$, we have $\left| \dfrac{x_{n-j}z}{\nj} \right| \leq |z| < 1$ and hence $\Phi_j(z,x)$ can be written in the following form
    \begin{align*}
        \Phi_j(z,x) &= \dfrac{\dfrac{x_j}{\nj} - x_n z}{1-\dfrac{x_{n-j}}{\nj}z}\\
        &= \lf\dfrac{x_j}{\nj} - x_nz \rf\lf 1+ \frac{x_{n-j}z}{\nj} + \lf \frac{x_{n-j}z}{\nj}\rf^2 + \cdots  \rf .
    \end{align*}
    For $x \in \mathcal W$, we also have
    \begin{align*}
        & \left|\dfrac{x_j}{\nj} - x_n z \right| \leq \left| \dfrac{x_j}{\nj} \right| +   |x_n|| z| \leq 2  \qq (\text{as } z \in \D )\\
        \text{ and } \q & \left|1-\dfrac{x_{n-j}z}{\nj}\right| \geq 1-\left|\dfrac{x_{n-j}z}{\nj}\right| \geq 1-|z|.
    \end{align*}
    Then for any $x \in \mathcal W$ and for any $m \in \mathbb N$, we have
    \begingroup
    \allowdisplaybreaks
    \begin{align*}
        |f_m(x) - \Phi_j(z,x)| &= \left| \dfrac{x_j}{\nj} - x_n z \right| \left| \lf \dfrac{x_{n-j}z}{\nj} \rf^{m+1} + \lf \dfrac{x_{n-j}z}{\nj}\rf^{m+2} + \cdots \right|\\
        & \leq 2 \left| \lf \dfrac{x_{n-j}z}{\nj} \rf^{m+1} \right| \left| 1+ \dfrac{x_{n-j}z}{\nj} + \lf \dfrac{x_{n-j}z}{\nj} \rf^2 + \cdots  \right|\\
        & = 2\dfrac{\left| \lf\dfrac{x_{n-j}z}{\nj} \rf^{m+1} \right|}{\left|1-\dfrac{x_{n-j}z}{\nj}\right|}\\
        & \leq \dfrac{2|z|^{m+1}}{1- |z|}.
    \end{align*}
    \endgroup
    Let $0< \epsilon < \dfrac{1}{3}\lf|\Phi_j(z,y)| - 1\rf$. Note that $z \in \D$ implies $|z|^{m+1}\rightarrow 0$ as $m \rightarrow \infty$. Hence there exists $k \in \mathbb N$, large enough, such that $\dfrac{2|z|^{k+1}}{1- |z|} < \epsilon$. Since
    \[
    |f_m(x) - \Phi_j(z,x)| \leq \dfrac{2|z|^{m+1}}{1- |z|} \q  \text{for all }  x \in \mathcal W \text{ and for any } m \in \mathbb N,
    \]
    we have
    \[
    |f_k(x) - \Phi_j(z,x)| < \epsilon \q \text{for all }\, x \in \mathcal W.
    \]
    Again since $\Gamn \subset \mathcal W$ and $|\Phi_j(z,x)|\leq 1$ for all $x \in \Gamn$, we have
    \[
    |f_k| \leq |f_k - \Phi_j(z,x)| + |\Phi_j(z,x)| \leq 1+ \epsilon \q \text{for all } x \in \Gamn.
    \]
    Note that, $|\Phi_j(z,y)| > 1+ 3\epsilon$. Since $y \in \mathcal W$, we have $|\Phi_j(z,y) - f_k(y)| < \epsilon$, that is
    \[
    |\Phi_j(z,y)| - |f_k(y)| \leq |\Phi_j(z,y) - f_k(y)| < \epsilon .
    \]
    Therefore,
    \[
    |f_k(y)| > |\Phi_j(z,y)| - \epsilon > 1 + 2\epsilon.
    \]
    Now take $f = (1+\epsilon)^{-1}f_k$. Then $|f(x)| \leq 1$ for all $x\in \Gamn$, and
    \[
    |f(y)|= \dfrac{|f_k(y)|}{1+\epsilon} >\dfrac{1 + 2\epsilon}{1 + \epsilon} >1.
    \]
    Hence $f$ is the required polynomial.
\end{proof}

\begin{thm}
    $\Gn$ and $\Gamn$ are both starlike about $(0,\dots,0)$ but not circular. Hence $\Gn$ is simply connected.
\end{thm}
\begin{proof}
    Let $y = (y_1,\dots,y_{n-1},q) \in \Gamn$ and let $0 \leq r < 1$. To prove that $\Gn$ and $\Gamn$ are starlike about $(0,0,0)$, it is enough to show that $ry \in \Gn$ for all such $y$ and $r $.
    First note that, for $r >0$, $z \in \C$ and $j=1, \dots, n-1$ we have
    \begingroup
    \allowdisplaybreaks
    \begin{align}{\label{starlike}}
        \nonumber & \left|\nj - rzy_{n-j}\right|^2 - \left| ry_j - \nj rzq \right|^2 \\ & \nonumber =  \nj^2 - 2\nj r\text{Re}(zy_{n-j}) + r^2|zy_{n-j}|^2 - \left| ry_j - \nj rzq \right|^2 \\
        \nonumber & = r^2\left\{\left|\nj - zy_{n-j}\right|^2 - \left|y_j - \nj zq\right|^2\right\}\\
        &\q + (1-r)\left(\nj^2 + \nj^2 r -2\nj r\text{Re}(zy_{n-j})\right) \, .
    \end{align}
    \endgroup
    Since $y \in \Gamn$, by condition $(3)$ of Theorem $\ref{char F 3}$, we have $\n \Phi_j(.,y)\n \leq 1$ for all $j = 1, \dots, \left[\frac{n}{2}\right]$. Hence, for each $j = 1, \dots, \left[\frac{n}{2}\right]$
    \begingroup
    \allowdisplaybreaks
    \begin{align*}
      \Phi_j(.,y)\n \leq 1 & \Lra  \left| \Phi_j(z,y) \right| \leq 1 \q \text{for all } z\in \overline\D \\
        \Lra & \left|\nj - zy_{n-j}\right|^2 - \left|y_j - \nj zq\right|^2 \geq 0 \, \q \text{ for any } z \in \overline{\mathbb{D}}. 
  \end{align*}
    \endgroup
    Again for any $z \in \overline{\D}$ and for all $j = 1, \dots, \left[\frac{n}{2}\right]$, we have
   \[
   \nj^2 + \nj^2r -2\nj r\text{Re}(zy_{n-j})= \left(\nj^2 - r|zy_{n-j}|^2\right) + r\left|\nj - zy_{n-j} \right|^2 > 0.
    \]
    This is because $r|zy_{n-j}|^2 < |zy_{n-j}|^2 \leq |y_{n-j}|^2 \leq \nj^2$ (as $y \in \Gamn$). Hence from equation $\eqref{starlike}$, we have
    \[ \left|\nj - rzy_{n-j}\right|^2 - \left|ry_j - \nj rzq \right|^2 > 0 \, ,\]
    for any $r\in[0,1)$, $z \in \overline{\D}$ and for all $j = 1, \dots, \left[\frac{n}{2}\right]$.
    Therefore,
    \[
    \left|\Phi_j(z,ry) \right| = \dfrac{\left|ry_j - \nj rzq \right|}{\left|\nj - rzy_{n-j}\right|} <1 \q \text{for all } j = 1, \dots, \left[\frac{n}{2}\right],
    \]
    whenever $y \in \Gamn$, $z \in \overline{\mathbb{D}}$ and
    $0 \leq r < 1 $. Thus, $\n\Phi_j(.,ry) \n <1 $ for all $j = 1, \dots, \left[\frac{n}{2}\right]$, whenever $y \in \Gamn$ and $0 \leq r < 1$.
    Therefore, by Theorem \ref{char G 3}, $ry \in \Gn$ whenever $y \in \Gamn$ and $0 \leq r < 1 $.
    Hence $\Gn$ and $\Gamn$ are starlike about $ (0,\dots,0) $.\\

    Next we show that $\Gamn $ is not a circular, which implies that $\Gn$ is not a circular domain either. Let $y= (y_1,\dots,y_{n-1},q) \in \Cn$ be such that $y_j = \nj $ for $j = 1, \dots, \left[\frac{n}{2}\right]$. and $q=1$, that is,
    \[ y =\lf {n \choose 1},\dots ,{n \choose {n-1}},1 \rf .\]
    To prove $\Gamn $ is not a circular, we show that $y \in \Gamn$ but $iy \notin \Gamn$. Note that, for the above mentioned point $y$, we have $y_jy_{n-j} = \nj^2q$, and
    \[
    \nj \left| y_j - \bar y_{n-j}q \right| + \left| y_jy_{n-j} - \nj^2 q \right| = 0 ,\q\text{ for each }\, j = 1, \dots, \left[\frac{n}{2}\right]
    \]
    and \[\nj^2 -|y_{n-j}|^2 = 0, \q\text{ for each }\, j = 1, \dots, \left[\frac{n}{2}\right]. \]
    Therefore, by condition $(4)$ of Theorem $\ref{char G 3}$, $y \in \Gamn$. Now consider the point
    \[\tilde y= iy =\lf i{n \choose 1},\dots ,i{n \choose {n-1}},i \rf .\]
    Then for each $j = 1, \dots, \left[\frac{n}{2}\right]$, we have
    $\nj^2 -|\tilde y_{n-j}|^2 = 0$. Again for each $j = 1, \dots, \left[\frac{n}{2}\right]$,
    \[
    \nj \left|\tilde y_j - \bar{\tilde y}_{n-j}q \right| + \left| \tilde y_j \tilde y_{n-j} - \nj^2 q \right|= 2\nj^2|1+i| >0 .
    \]
    Thus $\tilde y$ does not satisfy condition $(4)$ of Theorem $\ref{char G 3}$ and hence $\tilde y = iy \notin \Gamn$. Thus $\Gamn$ is not circular. Thus it follows from here that $\Gn$ is simply connected.

\end{proof}

\noindent \textbf{Acknowledgement.} We are thankful to the referee for making fruitful suggestions which help in improving the exposition of the article.

\vspace{0.7cm}

\end{document}